\documentclass[12pt]{amsart}
\pagestyle{myheadings}
\usepackage{amsmath}
\usepackage{amssymb}
\usepackage{amsthm}
\usepackage{amscd}
\usepackage{enumerate}
\usepackage{enumitem}
\usepackage{verbatim}
\usepackage[all]{xy}
\usepackage{mathrsfs}
\usepackage{mathabx}
\usepackage{url}
\usepackage{here}

\theoremstyle{plain}
\newtheorem{thm}{Theorem}[section]
\newtheorem{prop}[thm]{Proposition}
\newtheorem{lem}[thm]{Lemma}
\newtheorem{cor}[thm]{Corollary}

\theoremstyle{definition}
\newtheorem{dfn}[thm]{Definition}
\newtheorem{rmk}[thm]{Remark}

\newcommand{\Cusps}{\mathrm{Cusps}}

\newcommand{\End}{\mathrm{End}}

\newcommand{\har}{\mathrm{har}}

\newcommand{\Hom}{\mathrm{Hom}}

\newcommand{\Ker}{\mathrm{Ker}}

\newcommand{\ord}{\mathrm{ord}}

\newcommand{\Res}{\mathrm{Res}}

\newcommand{\Spec}{\mathrm{Spec}}

\newcommand{\St}{\mathrm{St}}
\newcommand{\Stab}{\mathrm{Stab}}
\newcommand{\src}{\mathrm{src}}

\newcommand{\cO}{\mathcal{O}}

\newcommand{\cT}{\mathcal{T}}

\newcommand{\Ga}{\mathbb{G}_\mathrm{a}}

\newcommand{\bC}{\mathbb{C}}

\newcommand{\bF}{\mathbb{F}}

\newcommand{\bP}{\mathbb{P}}

\newcommand{\bZ}{\mathbb{Z}}

\newcommand{\fra}{\mathfrak{a}}

\newcommand{\frem}{\mathfrak{m}}
\newcommand{\frn}{\mathfrak{n}}

\makeatletter
    
    \@addtoreset{equation}{section}
\makeatother

\makeatletter
\renewcommand{\p@enumii}{}
\makeatother

\begin{document}

\title[Triviality of the Hecke action on Drinfeld cuspforms]{Triviality of the Hecke action on ordinary Drinfeld cuspforms of level $\Gamma_1(t^n)$}
\author{Shin Hattori}
\address[Shin Hattori]{Department of Natural Sciences, Tokyo City University}

\date{\today}


\begin{abstract}
Let $k\geq 2$ and $n\geq 1$ be any integers. In this paper, we prove that all Hecke operators
act trivially on the space of ordinary Drinfeld cuspforms of level $\Gamma_1(t^n)$ and weight $k$.

\end{abstract}

\maketitle



\section{Introduction}\label{SecIntro}

Let $p$ be a rational prime, $q>1$ a $p$-power integer, $A=\bF_q[t]$, $K=\bF_q(t)$ and $K_\infty=\bF_q((1/t))$. 
Let $\bC_\infty$ be the $(1/t)$-adic completion of an algebraic closure of $K_\infty$ and put $\Omega=\bC_\infty\setminus K_\infty$, which has a natural 
structure as a rigid analytic variety over $K_\infty$. For any non-zero element $\frn\in A$, we put
\[
\Gamma_1(\frn)=\left\{\gamma\in \mathit{SL}_2(A)\ \middle |\ \gamma\equiv\begin{pmatrix}
1 & *\\
0& 1
\end{pmatrix}
\bmod \frn \right\}.
\]

For any arithmetic subgroup $\Gamma$ of $\mathit{SL}_2(A)$ and integer $k\geq 2$, a rigid analytic function $f:\Omega\to \bC_\infty$ is called a Drinfeld 
modular form of level $
\Gamma$ and weight $k$ if it satisfies
\[
f\left(\frac{az+b}{cz+d}\right)=(cz+d)^kf(z)\quad \text{for any }\begin{pmatrix}
a & b\\
c& d
\end{pmatrix}\in \Gamma
\]
and a certain regularity condition at cusps. A Drinfeld modular form is called a cuspform if it vanishes at all cusps, and a double cuspform 
if it vanishes twice at all cusps.
They form $\bC_\infty$-vector spaces $S_k(\Gamma)$ and $S^{(2)}_k(\Gamma)$, respectively. These spaces admit a natural action of Hecke operators.

Let $\wp\in A$ be an irreducible polynomial, $K_\wp$ the $\wp$-adic completion of $K$ and $\bC_\wp$ the $\wp$-adic completion of an algebraic closure of $K_\wp$.
For the algebraic closure $\bar{K}$ of $K$ in $\bC_\infty$, we fix an embedding $\iota_\wp: \bar{K}\to \bC_\wp$.

Suppose that $\wp$ divides $\frn$. The Hecke 
operator at $\wp$ acting on $S_k(\Gamma_1(\frn))$ is denoted by $U_\wp$. Note that any eigenvalue of 
$U_\wp$ is an element 
of $
\bar{K}$.
We say $f\in S_k(\Gamma_1(\frn))$ is ordinary (with respect to $\iota_\wp$) if $f$ is in the generalized eigenspace belonging to an eigenvalue $\lambda\in 
\bar{K}$ satisfying $\iota_\wp(\lambda)\in \cO_{\bC_\wp}^\times$. We denote the subspace of ordinary Drinfeld cuspforms by $S_k^\ord(\Gamma_1(\frn))$. 
It is an analogue of the notion of ordinariness for elliptic modular forms studied in \cite{Hida_E}.

Let us focus on the case $\frn=t^n$ and $\wp=t$ with some integer $n\geq 1$. In this case, the structure of 
$S_k^\ord(\Gamma_1(t^n))$ seems quite simple.
For $n=1$, it is known that all Hecke operators act trivially on the one-dimensional $\bC_\infty$-vector space $S_k^\ord(\Gamma_1(t))$ \cite[Proposition 
4.3]{Ha_DMBF}. In this paper, we prove that this holds in general, as follows.

\begin{thm}[Theorem \ref{ThmTriv}]\label{ThmIntro}
	Let $k\geq 2$ and $n\geq 1$ be any integers. Then we have 
	\[
	\dim_{\bC_\infty}S_k^\ord(\Gamma_1(t^n))=q^{n-1}
	\]
	 and all Hecke operators act 
	trivially on 
	$S_k^\ord(\Gamma_1(t^n))$.
	\end{thm}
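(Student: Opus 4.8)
The plan is to reduce the computation of $S_k^\ord(\Gamma_1(t^n))$, together with its Hecke action, to a problem about $U_t$-stable subspaces of a space of cuspforms that I can describe by explicit $t$-expansions. The first step is to recall the structure of $S_k(\Gamma_1(t^n))$ and its cusps. The group $\Gamma_1(t^n)$ has a controlled set of cusps, and a cuspform admits a $t$-expansion (an expansion in the Carlitz analogue of $q$) at the cusp $\infty$; by the theory of the Hecke action on Drinfeld modular forms, $U_t$ acts on these expansions by a formula that truncates and rescales coefficients, so that $U_t$ is, up to normalization, a shift operator on the relevant expansion variable. I would make precise the filtration of $S_k(\Gamma_1(t^n))$ by the subspaces $S_k(\Gamma_1(t^m))$ for $m\leq n$ and by the stalk order at the cusps, and record how $U_t$ interacts with each graded piece.

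The second step is the key dévissage: I expect the ordinary part to be concentrated in the "oldest" or most singular graded piece, where the relevant cusp contributions vanish to maximal order. Concretely, I would argue that on each graded piece where $U_t$ acts topologically nilpotently (i.e. with all eigenvalues of positive $t$-adic valuation), the generalized eigenspace for a $t$-adic unit eigenvalue is zero, so those pieces contribute nothing to $S_k^\ord$. This leaves a single graded piece on which $U_t$ is invertible; on that piece I must show $U_t$ acts as a scalar (in fact, after the right normalization, as the identity), and that the remaining Hecke operators $T_\wp$ for $\wp\neq t$ also act trivially. For the dimension count $q^{n-1}$, I would either exhibit an explicit basis of this piece indexed by something of size $q^{n-1}$ — e.g. the cosets governing the level structure at $t$, which have cardinality $q^{n-1}$ after accounting for the $\mathit{SL}_2$-normalization — or I would compute $\dim S_k(\Gamma_1(t^n))$ via the known formulas for dimensions of Drinfeld cuspform spaces and subtract the non-ordinary part. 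The case $n=1$ from \cite[Proposition 4.3]{Ha_DMBF} serves as the base of an induction on $n$, with the inductive step comparing $\Gamma_1(t^n)$ to $\Gamma_1(t^{n-1})$ via the degeneracy/trace maps and tracking how $U_t$-ordinarity behaves under them.

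For the triviality of the Hecke operators on the surviving piece, the strategy is to use that this piece is forced to consist of forms that are, up to the unit eigenvalue normalization, "$U_t$-stabilizations" of lower-level Eisenstein-type or trivial objects — so their prime-to-$t$ Hecke eigensystems are pinned down. More robustly, I would exploit multiplicity-one-type rigidity: any Hecke eigenform in $S_k^\ord(\Gamma_1(t^n))$ has, by the above, $U_t$-eigenvalue a $t$-adic unit in $\bar K$, and one shows the only algebraic numbers that can occur as such eigenvalues, together with the Ramanujan-type or Weil-bound constraints on the $T_\wp$-eigenvalues available in this setting, force every eigenvalue to be the "trivial" one; hence the whole space is a single generalized eigenspace with trivial eigensystem, and a separate argument (semisimplicity of the prime-to-$t$ Hecke action, or direct computation on $t$-expansions) upgrades this to genuine triviality rather than mere unipotence.

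The main obstacle I anticipate is the passage from the base case $n=1$ to general $n$: controlling precisely which part of $S_k(\Gamma_1(t^n))$ survives in the ordinary locus and pinning its dimension to exactly $q^{n-1}$. This requires a clean description of the degeneracy maps $S_k(\Gamma_1(t^{n-1}))\to S_k(\Gamma_1(t^n))$ and their effect on $U_t$, and a proof that $U_t$ is topologically nilpotent on the "new at level $t^n$" complement — essentially a Drinfeld-modular analogue of the fact that, for elliptic modular forms, $U_p$ kills the ordinary part outside the $p$-stabilized old subspace. Getting the combinatorics of cusps and the normalization of $U_t$ exactly right, so that the invertible piece has the stated dimension and the Hecke action on it is literally trivial, is where the real work lies; everything else is bookkeeping on $t$-expansions.
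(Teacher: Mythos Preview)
Your outline has the right skeleton at the level of the cusp filtration: the paper does introduce a subspace $S'_k\subseteq S_k(\Gamma_1(t^n))$ of forms vanishing to order $\geq 2$ at the cusps $h_{(0,d)}(\infty)$, shows $\dim S_k/S'_k=q^{n-1}$, proves that every Hecke operator acts trivially on $S_k/S'_k$ by a direct Fourier-expansion computation, and shows $U_t$ is nilpotent on $S'_k/S_k^{(2)}$. So the d\'evissage you sketch is real. But two of your proposed inputs do not work, and the ingredient that actually closes the argument is absent from your plan.

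First, the triviality of the Hecke operators is \emph{not} obtained from any Ramanujan- or Weil-type bound on eigenvalues; no such bound is available or invoked. It comes from the elementary Fourier identities in Lemma~\ref{LemCoeffHecke}, which show that on $S_k/S'_k$ every $T_\frem$ acts as the identity. Once one knows $\dim S_k^\ord=q^{n-1}$, the projection $S_k^\ord\to S_k/S'_k$ is a Hecke-equivariant isomorphism and triviality follows. Second, your induction on $n$ via degeneracy maps, reducing to ``$U_t$ is topologically nilpotent on the new-at-$t^n$ part'', is precisely the step the paper does \emph{not} know how to do: after Theorem~\ref{ThmNilpV} the whole theorem is equivalent to the nilpotency of $U_t$ on $S_2^{(2)}(\Gamma_1(t^n))$, and Remark~\ref{RmkNilp} records that the author has no direct proof of this nilpotency. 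So your proposed inductive step is exactly the missing hard part, not a routine piece of bookkeeping.

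What the paper uses instead is the diamond-operator action of $\Theta_n=1+tA_n$. Via harmonic cocycles on the Bruhat--Tits tree one writes down an explicit basis $\{[c,d]\}$ of $S_2(\Gamma_1(t^n))$ and checks that $S_2(\Gamma_1(t^n))$ is free of rank $q^{n-1}$ over $\bC_\infty[\Theta_n]$ (Proposition~\ref{PropFree}). Since the $\Theta_n$-fixed part of $S_2^\ord(\Gamma_1(t^n))$ is $S_2^\ord(\Gamma_0^p(t^n))$, which is one-dimensional by the $n=1$ case together with \cite[Proposition~3.5]{Ha_DMBF}, a short argument with irreducible $\Theta_n$-representations forces $S_2^\ord$ to embed in a single copy of $\bC_\infty[\Theta_n]$, giving $\dim S_2^\ord\leq q^{n-1}$. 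This freeness argument is the genuine new idea, and nothing in your outline points toward it.
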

This suggests that Hida theory for Drinfeld cuspforms should be trivial for the level $\Gamma_1(t^n)$.

For Drinfeld modular forms, it is well-known that the weak multiplicity one, which states that any Hecke eigenform is determined up to a scalar multiple 
by the Hecke eigenvalues, is false. Gekeler \cite[\S7]{Gek} raised a question if the property holds when we fix the weight. Theorem \ref{ThmIntro} gives a 
negative answer to 
it (see also \cite[Examples 15.4 and 15.7]{Boeckle} for a variant ignoring Hecke eigenvalues at places dividing the level).

For the proof of Theorem \ref{ThmIntro}, we study a 
subspace $S'_k$ of $S_k=S_k(\Gamma_1(t^n))$ containing $S_k^{(2)}=S_k^{(2)}(\Gamma_1(t^n))
$. It consists of cuspforms which vanish twice at unramified cusps (\S\ref{SubsecDDC}). We show that all Hecke 
operators act trivially on $S_k/S'_k$ and $U_t$ is nilpotent on $S'_k/S_k^{(2)}$ (Lemma \ref{LemQuotTriv} and 
Proposition \ref{PropMidNilp}). Then, using the constancy of the dimension of $S_k^\ord(\Gamma_1(t^n))$ with respect to $k$ \cite[Proposition 3.4]{Ha_DMBF}, 
we reduce Theorem \ref{ThmIntro} to showing that the dimension of 
$S_2^\ord(\Gamma_1(t^n))$ is no more than $q^{n-1}$
(Theorem \ref{ThmNilpV}).

Consider the multiplicative group $\Theta_n=1+tA/t^n A$, which acts on $S_k(\Gamma_1(t^n))$ via the diamond operator. 
To obtain the upper bound of the dimension, the key point is the 
freeness 
of $S_2(\Gamma_1(t^n))$ as a module over the group ring
$\bC_\infty[\Theta_n]$ (Proposition \ref{PropFree}): From the fact that $S_2^\ord(\Gamma_1(t))$ is one-dimensional \cite[Lemma 2.4]{Ha_GVt}
and another constancy result of the dimension of the ordinary subspace \cite[Proposition 3.5]{Ha_DMBF},
we see that the $\Theta_n$-fixed part of $S_2^\ord(\Gamma_1(t^n))$ is also one-dimensional. Thus the freeness implies 
that it injects into a single component $\bC_\infty[\Theta_n]$ of the free $\bC_\infty[\Theta_n]$-module $S_2(\Gamma_1(t^n))$, 
which gives the desired bound.

The paper is organized as follows. In \S\ref{SecDCF}, we will recall the definition of Hecke operators and study their effect on Fourier expansions of Drinfeld 
cuspforms at cusps. In 
\S\ref{SecU}, we will define the 
subspace $S'_k$ and study its properties analytically.
In \S\ref{SecTriv}, using the description of Drinfeld cuspforms via harmonic cocycles on the Bruhat-Tits tree 
\cite{Tei,Boeckle}, we will give an explicit basis 
of the $\bC_\infty$-vector space $S_2(\Gamma_1(t^n))$ and a description of the diamond operator in terms of the basis. 
These enable us to show the freeness and Theorem \ref{ThmIntro}.

\subsection*{Acknowledgements} The author would like to thank Ernst-Ulrich Gekeler and Federico Pellarin for helpful conversations on this topic, 
and Gebhard B\"{o}ckle for pointing out an error in a previous manuscript.
A part of this work was carried out during the author's visit to Universit\'{e} Jean Monnet. He wishes to thank its hospitality. This work was supported 
by JSPS KAKENHI Grant Numbers JP17K05177 and JP20K03545.


\section{Drinfeld cuspforms of level $\Gamma_1(\frn)$}\label{SecDCF}


Let $k\geq 2$ be any integer and $\frn$ any element of $A\setminus \bF_q$. In this section, we study Hecke operators acting 
on $S_k(\Gamma_1(\frn))$.
For any group $\Gamma$ acting on a set $X$, we denote the stabilizer of $x\in X$ in $\Gamma$ by $
\Stab(\Gamma,x)$.

\subsection{Cusps and uniformizers}\label{SubsecCU}


Consider the action of $\mathit{SL}_2(A)$ on $\bP^1(\bC_\infty)$ defined by
\[
\begin{pmatrix}
	a&b\\c&d
\end{pmatrix}
\begin{pmatrix}
	x\\ y
\end{pmatrix}=\begin{pmatrix}
	ax+by\\ cx+dy
\end{pmatrix}.
\]
We refer to any element of $\bP^1(K)$ as a cusp.
For any arithmetic subgroup $\Gamma$ of $\mathit{SL}_2(A)$, put
\[
\Cusps(\Gamma)=\Gamma\backslash \bP^1(K).
\]
We abusively identify an element of $\Cusps(\Gamma)$ with a cusp representing it.

Next we recall the definition of the uniformizer at each cusp \cite[(2.7)]{GR}, following the normalization of \cite[(4.1)]{Gek}. 
Let $C$ be the Carlitz module. It is the Drinfeld module of rank one over $A$ defined by
the homomorphism of $\bF_q$-algebras
\[
A=\bF_q[t]\to \End(\Ga),\quad t\mapsto (Z\mapsto tZ+Z^q),
\]
where we put $\Ga=\Spec(A[Z])$. For any $a\in A$, we denote by $\Phi^C_a(Z)$ the element of $A[Z]$ such that
the image of $a$ by the map above is defined by $(Z\mapsto \Phi^C_a(Z))$.

For any subgroup $\mathfrak{b}$ of $A$ containing a non-zero ideal of $A$, we define
\[
e_{\mathfrak{b}}(z)=z\prod_{0\neq b\in \mathfrak{b}}\left(1-\frac{z}{b}\right),
\]
which is an entire function on $\Omega$. Let $\bar{\pi}\in \bC_\infty$ be a Carlitz period, so that
\begin{equation}\label{EqnCar}
	\Phi^C_t(\bar{\pi}e_A(z))=\bar{\pi}e_A(tz).
\end{equation}
For any integer $l\geq 0$, we put
\[
u_\mathfrak{b}(z)=\frac{1}{\bar{\pi}e_{\mathfrak{b}}(z)},\quad u(z)=u_A(z),\quad u_l(z)=u_{(t^l)}(z)=\frac{1}{t^l}u\left(\frac{z}{t^l}\right).
\]

Since $\frn\in A\setminus \bF_q$, the group $\Gamma_1(\frn)$ is $p'$-torsion free.
For any cusp $s\in \bP^1(K)$, choose $
\nu_s\in \mathit{SL}_2(A)$ satisfying $\nu_s(\infty)=s$ and put
\[
\mathfrak{b}_s=\left\{b\in A\ \middle|\ \begin{pmatrix}
	1&b\\0&1
\end{pmatrix}\in \Stab(\nu^{-1}_s\Gamma_1(\frn)\nu_s,\infty)\right\}\supseteq (\frn).
\] 
Then we refer to the function 
\[
u_s(z):=u_{\mathfrak{b}_s}(z)
\]
as the uniformizer at $s$ for $\Gamma_1(\frn)$. 
Note that $\mathfrak{b}_s$ depends only on $s$ up to a multiple of an element of $\bF_q^\times$. 
Thus $\mathfrak{b}_s$ and $u_s(z)$ are independent of the choice of $\nu_s$ if $\mathfrak{b}_s$ is an ideal of $A$ for some choice of $\nu_s$.
For example, we have $\mathfrak{b}_\infty=A$ for any choice of $\nu_\infty$ and the uniformizer at $\infty$ is $u(z)$.

For any function $f$ on $\Omega$, integer $k\geq 2$ and $\gamma\in \mathit{GL}_2(K)$,
we define the slash operator by
\[
\left(f|_k\gamma\right)(z)=\det(\gamma)^{k-1}(cz+d)^{-k}f\left(\frac{az+b}{cz+d}\right),\quad \gamma=\begin{pmatrix}
	a&b\\c&d
\end{pmatrix}.
\]
Then, for any $f\in S_k(\Gamma_1(\frn))$, we can write
\[
(f|_k\nu_s)(z)=\sum_{i\geq 1} a_i u_s(z)^i,\quad a_i\in \bC_\infty
\] 
when the $(1/t)$-adic absolute value $|u_s(z)|$ of $u_s(z)$ is sufficiently small. 
We refer to it as the Fourier expansion of $f$ at the cusp $s$
and put
\[
\ord(s,f)=\min\{i\geq 1\mid a_i\neq 0\}.
\]
The latter is independent of the choice of $\nu_s$.

\begin{lem}\label{LemCoeffHecke}
	Let $\frem\in A$ be any monic irreducible polynomial and $i\geq 1$ any integer.
	\begin{enumerate}
		\item\label{LemCoeffHecke-beta1} $\sum_{\deg(\beta)<\deg(\frem)} u\left(\frac{z+\beta}{\frem}\right)=\frem u(z)$.
		\item\label{LemCoeffHecke-beta2} If $i\geq 2$, then 
		$\sum_{\deg(\beta)<\deg(\frem)} u\left(\frac{z+\beta}{\frem}\right)^i\in \frem u(z)^2A[u(z)]$.
		\item\label{LemCoeffHecke-dia} $u(\frem z)\in  u(z)^2 A[[u(z)]].$
	\end{enumerate}
	Here the sum $\sum_{\deg(\beta)<\deg(\frem)}$ runs over the set of $\beta\in A$ satisfying $\deg(\beta)<\deg(\frem)$.
\end{lem}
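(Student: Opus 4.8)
The common ingredient is the functional equation of the Carlitz exponential. Put $d=\deg(\frem)$ and $u=u(z)=1/(\bar{\pi}e_A(z))$, and recall the standard facts that $\bar{\pi}e_A$ is $\bF_q$-linear with kernel exactly $A$ (both clear from the product formula) and that iterating (\ref{EqnCar}) together with this linearity gives $\bar{\pi}e_A(az)=\Phi^C_a(\bar{\pi}e_A(z))$ for every $a\in A$. Since $\frem$ is monic of degree $d$, we have $\Phi^C_\frem(Z)=\frem Z+\dots+Z^{q^d}\in A[Z]$, with linear coefficient $\frem$ and leading coefficient $1$. Part (\ref{LemCoeffHecke-dia}) is then immediate: $u(\frem z)=\Phi^C_\frem(1/u)^{-1}$, and $\Phi^C_\frem(1/u)=u^{-q^d}v(u)$ with $v\in A[u]$ and $v(0)=1$ (multiply through by $u^{q^d}$); hence $v$ is a unit in $A[[u]]$ and $u(\frem z)=u^{q^d}v(u)^{-1}\in u^{q^d}A[[u]]\subseteq u^2A[[u]]$ because $q^d\geq q\geq 2$.

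For (\ref{LemCoeffHecke-beta1}) and (\ref{LemCoeffHecke-beta2}) the starting point is the polynomial identity
\[
\Phi^C_\frem(X)-\frac1u=\prod_{\deg(\beta)<d}\Bigl(X-\bar{\pi}e_A\bigl(\tfrac{z+\beta}{\frem}\bigr)\Bigr),
\]
whose two sides are monic of degree $q^d$ in $X$ and agree because each $\bar{\pi}e_A(\tfrac{z+\beta}{\frem})$ is a root of the left side (indeed $\Phi^C_\frem(\bar{\pi}e_A(\tfrac{z+\beta}{\frem}))=\bar{\pi}e_A(z+\beta)=\bar{\pi}e_A(z)=1/u$ by the functional equation and $A$-periodicity), and the $q^d$ values $\bar{\pi}e_A(\tfrac{z+\beta}{\frem})$ are pairwise distinct since $\bar{\pi}e_A(\tfrac{\beta-\beta'}{\frem})=0$ forces $\frem\mid\beta-\beta'$.

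Now I take the logarithmic derivative of this identity with respect to $X$, using that $(\Phi^C_\frem)'(X)=\frem$ is constant (being the derivative of an additive polynomial in characteristic $p$), to get
\[
\sum_{\deg(\beta)<d}\frac{1}{X-\bar{\pi}e_A(\tfrac{z+\beta}{\frem})}=\frac{\frem}{\Phi^C_\frem(X)-1/u}=-\frem u\sum_{m\geq 0}\bigl(u\,\Phi^C_\frem(X)\bigr)^m.
\]
For $z\in\Omega$ with $|u(z)|$ small, each $|\bar{\pi}e_A(\tfrac{z+\beta}{\frem})|$ is large, so the left side expands around $X=0$ as $-\sum_{i\geq 1}\bigl(\sum_{\deg(\beta)<d}u(\tfrac{z+\beta}{\frem})^i\bigr)X^{i-1}$, and comparing it with the right side termwise in $X$ yields $\sum_{\deg(\beta)<d}u(\tfrac{z+\beta}{\frem})=\frem u$ (coefficient of $X^0$), which is (\ref{LemCoeffHecke-beta1}), and $\sum_{\deg(\beta)<d}u(\tfrac{z+\beta}{\frem})^i=\sum_{m=1}^{i-1}\frem u^{m+1}\bigl([X^{i-1}]\,\Phi^C_\frem(X)^m\bigr)\in\frem u^2A[u]$ for $i\geq 2$ (coefficient of $X^{i-1}$, using that $\Phi^C_\frem$ has coefficients in $A$ and vanishing constant term), which is (\ref{LemCoeffHecke-beta2}). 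The only delicate point is that $u(\tfrac{z+\beta}{\frem})$ is not individually a power series in $u(z)$ — it becomes one only over a ramified extension — so one has to argue through the symmetric combinations, which the logarithmic-derivative expansion produces automatically; one also checks that both displayed expansions converge on a common region in $X$ for $|u(z)|$ small, so that the resulting coefficientwise identities are genuine identities of analytic functions of $z$ near the cusp $\infty$, i.e., of Fourier expansions.
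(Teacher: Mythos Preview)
Your proof is correct. For part (\ref{LemCoeffHecke-dia}) it coincides with the paper's argument. For parts (\ref{LemCoeffHecke-beta1}) and (\ref{LemCoeffHecke-beta2}) the paper instead invokes Gekeler's formula expressing $\sum_{\deg(\beta)<\deg(\frem)} u((z+\beta)/\frem)^i$ as the $i$-th Goss polynomial $G_{i,\frem}$ evaluated at $\frem u(z)$, together with the structural facts that $G_{1,\frem}(X)=X$, that $G_{i,\frem}$ has no linear term for $i\geq 2$, and that $G_{i,\frem}(\frem X)\in\frem A[X]$. Your logarithmic-derivative computation is essentially the standard derivation of that Goss-polynomial identity, carried out inline: the factorization $\Phi^C_\frem(X)-1/u=\prod_\beta(X-\bar{\pi}e_A((z+\beta)/\frem))$ and the constancy of $(\Phi^C_\frem)'$ are exactly what underlies Gekeler's result. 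So the two proofs are the same at the level of ideas; yours is more self-contained and avoids the Goss-polynomial formalism, while the paper's is shorter by citing the established formula.
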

\begin{proof}
	Put $r=\deg(\frem)$ and $\Phi^C_\frem(Z)=\frem Z+c_1 Z^q+\cdots+c_{r-1}Z^{q^{r-1}}+Z^{q^r}$. 
	Then we have
	\begin{equation}\label{EqnGekelerPre}
	z\prod_{0\neq b\in C[\frem](\bC_\infty)}\left(1-\frac{z}{b}\right)=\frem^{-1}\Phi_\frem^C(z).
	\end{equation}
	Let $\alpha_i$ be the coefficient of $Z^{q^i}$ 
	in $\frem^{-1}\Phi^C_\frem(Z)$.
	By \cite[Lemma 3.2]{Ha_HTT}, we have $\alpha_i\in A$ for any $0\leq i\leq r-1$ and $\alpha_r=\frem^{-1}$.

	Let $G_{i,\frem}(X)$ be the $i$-th Goss polynomial with respect to the $\bF_q$-vector space $C[\frem](\bC_\infty)$. Then \cite[computation above (7.3)]{Gek} 
	gives
	\begin{equation}\label{EqnGekeler}
	\sum_{\deg(\beta)<\deg(\frem)} u\left(\frac{z+\beta}{\frem}\right)^i=G_{i,\frem}(\frem u(z)).
	\end{equation}
	
	For $i=1$, we have $G_{i,\frem}(X)=X$ and (\ref{LemCoeffHecke-beta1}) follows. 
	For $i\geq 2$, \cite[(3.8)]{Gek} and (\ref{EqnGekelerPre}) show that 
	$G_{i,\frem}(X)$ has no linear term and $G_{i,\frem}(\frem X)\in \frem A[X]$, which yields (\ref{LemCoeffHecke-beta2}). 
	Moreover, we have
	\[
	u(\frem z)=\frac{u(z)^{q^r}}{1+c_{r-1}u(z)^{q^r-q^{r-1}}+\cdots+\frem u(z)^{q^r-1}},
	\]
	which implies (\ref{LemCoeffHecke-dia}).
\end{proof}

Put $\zeta_{t^l}=\bar{\pi}e_A\left(\frac{1}{t^l}\right)\in \bC_\infty$, so that $\Phi^C_{t^l}(\zeta_{t^l})=0$ by (\ref{EqnCar}). 

\begin{lem}\label{LemUnifHecke}
	Let $l\geq 1$ be any integer. For any $\beta\in \bF_q$, we have 
	\[
	u_{l-1}\left(\frac{z+\beta}{t}\right)\in t u_{l}(z)A[\zeta_{t^l}][[u_{l}(z)]].
	\]
	Here $A[\zeta_{t^l}]$ is the $A$-subalgebra of $\bC_\infty$ generated by $\zeta_{t^l}$.
\end{lem}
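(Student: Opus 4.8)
The plan is to reduce everything to the additivity of the Carlitz exponential $\bar{\pi}e_A$, which turns the left-hand side into an explicit rational expression in $u_l(z)$. First I would unwind the scaling relations among the functions $u_m$. Since $u_m = u_{(t^m)} = (\bar{\pi}e_{(t^m)})^{-1}$ and, directly from the product formula, $e_{(t^m)}(w) = t^m e_A(w/t^m)$ (reindex $b = t^m a$ with $a$ running over $A\setminus\{0\}$), one has $\bar{\pi}e_A(w/t^m) = u_m(w)^{-1}/t^m$. Applying this with $m = l-1$ and $w = (z+\beta)/t$ gives
\[
u_{l-1}\!\left(\frac{z+\beta}{t}\right)^{-1} = \bar{\pi}e_{(t^{l-1})}\!\left(\frac{z+\beta}{t}\right) = t^{l-1}\,\bar{\pi}e_A\!\left(\frac{z+\beta}{t^l}\right) = t^{l-1}\,\bar{\pi}e_A\!\left(\frac{z}{t^l} + \frac{\beta}{t^l}\right).
\]

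Next I would invoke that $e_A$ is $\bF_q$-linear and additive: it is the exponential function attached to the $\bF_q$-lattice $A \subset \bC_\infty$, and $\bar{\pi}e_A$ is the Carlitz exponential up to the normalization (\ref{EqnCar}). Hence $\bar{\pi}e_A(z/t^l + \beta/t^l) = \bar{\pi}e_A(z/t^l) + \beta\,\bar{\pi}e_A(1/t^l) = \bar{\pi}e_A(z/t^l) + \beta\zeta_{t^l}$ by the definition of $\zeta_{t^l}$. Since $\bar{\pi}e_A(z/t^l) = u_l(z)^{-1}/t^l$ by the computation above, I obtain the closed-form identity, valid as analytic functions for $|z|$ large,
\[
u_{l-1}\!\left(\frac{z+\beta}{t}\right)^{-1} = t^{l-1}\!\left(\frac{1}{t^l u_l(z)} + \beta\zeta_{t^l}\right) = \frac{1 + t^l\beta\zeta_{t^l}\,u_l(z)}{t\,u_l(z)}, \qquad\text{so}\qquad u_{l-1}\!\left(\frac{z+\beta}{t}\right) = \frac{t\,u_l(z)}{1 + t^l\beta\zeta_{t^l}\,u_l(z)}.
\]

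Finally, since $t^l\beta\zeta_{t^l} \in A[\zeta_{t^l}]$, expanding the right-hand side as a geometric series in $u_l(z)$ yields
\[
u_{l-1}\!\left(\frac{z+\beta}{t}\right) = t\,u_l(z)\sum_{j\geq 0}(-t^l\beta\zeta_{t^l})^j\,u_l(z)^j \in t\,u_l(z)\,A[\zeta_{t^l}][[u_l(z)]],
\]
which is the assertion; the series converges precisely in the regime $|u_l(z)|$ small in which Fourier expansions are taken. I expect the only substantive point — and it is a standard fact rather than a real obstacle — to be the additivity of $\bar{\pi}e_A$ together with the legitimacy of evaluating it at $\beta/t^l \in K_\infty$ (a point outside $\Omega$): this is harmless because $e_A$ is entire on all of $\bC_\infty$, so the addition formula, an identity of entire functions, applies verbatim. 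Everything else is bookkeeping with the relations among the $u_m$.
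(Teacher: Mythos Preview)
Your proof is correct and follows essentially the same route as the paper: both compute $u_{l-1}\!\left(\frac{z+\beta}{t}\right)$ by unwinding the definition to $\bar{\pi}e_A\!\left(\frac{z+\beta}{t^l}\right)$, apply the $\bF_q$-linearity of $e_A$ to split off $\beta\zeta_{t^l}$, and arrive at the identical closed form $t\,u_l(z)/(1+t^l\beta\zeta_{t^l}\,u_l(z))$, from which the claim follows by geometric expansion. Your write-up is slightly more explicit about the convergence regime, but the argument is the same.
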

\begin{proof}
	This follows from
	\begin{align*}
		u_{l-1}\left(\frac{z+\beta}{t}\right)&=\frac{t}{t^{l}\bar{\pi}e_A\left(\frac{z+\beta}{t^{l}}\right)}=\frac{t}{t^{l}\bar{\pi}e_A\left(\frac{z}{t^{l}}
		\right)}
		\cdot \frac{1}{1+\frac{t^{l}\bar{\pi}e_A\left(\frac{\beta}{t^{l}}\right)}{t^{l}\bar{\pi}
				e_A\left(\frac{z}
				{t^{l}}\right)}}\\
		&=t u_l(z)\cdot \frac{1}{1+t^{l}\beta\zeta_{t^l}u_l(z)}.
	\end{align*}
\end{proof}


\subsection{Hecke operators}\label{SubsecHeckeOp}


Now we recall the definition of Hecke operators (for example, see \cite[\S3.1]{Ha_DMBF}).
Let $\frem\in A$ be any monic irreducible polynomial. 
Then the Hecke operator $T_\frem$ acting on $S_k(\Gamma_1(\frn))$ is defined as
\[
T_\frem f=\sum_{\xi} f|_k \xi,
\]
where $\xi$ runs over any complete set of representatives of the coset space
\begin{equation}\label{EqnCoset}
\Gamma_1(\frn)\backslash \Gamma_1(\frn)\begin{pmatrix}
	1 & 0\\0& \frem
\end{pmatrix}\Gamma_1(\frn).
\end{equation}
When $\frem|\frn$, we write $T_\frem$ also as $U_\frem$. 

Let $\fra\in A$ be any element which is prime to $\frn$. 
Take any matrix $\eta_{\fra,\diamond}\in \mathit{SL}_2(A)$ satisfying
\[
\eta_{\fra,\diamond} \equiv \begin{pmatrix}
	* & *\\0 & \fra
\end{pmatrix}\bmod \frn
\]
and put
\[
\xi_{\fra, \diamond}=
\eta_{\fra, \diamond}\begin{pmatrix}
	\fra & 0\\0 & 1
\end{pmatrix}.
\]
Note that we have
\[
\eta_{\fra,\diamond}\Gamma_1(\frn)\eta_{\fra,\diamond}^{-1}=\Gamma_1(\frn),\quad 
\xi_{\fra,\diamond}\Gamma_1(\fra\frn)\xi_{\fra,\diamond}^{-1}\subseteq \Gamma_1(\frn).
\]
Hence we obtain
\begin{equation}\label{EqnStabilization}
	f\in S_k(\Gamma_1(\frn))\Rightarrow f|_k\eta_{\fra,\diamond}\in S_k(\Gamma_1(\frn)),\ f|_k\xi_{\fra,\diamond}\in 
	S_k(\Gamma_1(\fra\frn)).
\end{equation}

For any $\alpha\in (A/(\frn))^\times$, we choose a lift $\fra\in A$ of $\alpha$ and put
\[
\langle \alpha\rangle_\frn f=f|_k\eta_{\fra,\diamond}
\]
for any $f\in S_k(\Gamma_1(\frn))$, which is independent of the choices of $\fra$ and $\eta_{\fra,\diamond}$. Then $\alpha\mapsto \langle \alpha\rangle_\frn$ 
defines an action of 
the group $(A/(\frn))^\times$ on $S_k(\Gamma_1(\frn))$. 

\begin{lem}\label{LemDiaHecke}
	For any $\alpha\in (A/(\frn))^\times$, the diamond operator $\langle \alpha\rangle_\frn$ commutes with all Hecke operators.
\end{lem}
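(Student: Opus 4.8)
The plan is to verify the commutation $\langle \alpha\rangle_\frn T_\frem = T_\frem \langle \alpha\rangle_\frn$ at the level of double cosets, since both operators are defined by summing slash actions over explicit coset representatives. First I would fix a lift $\fra\in A$ of $\alpha$ and a matrix $\eta=\eta_{\fra,\diamond}\in \mathit{SL}_2(A)$ as in the definition, so that $\langle\alpha\rangle_\frn f = f|_k\eta$. Using the key normalization property $\eta\,\Gamma_1(\frn)\,\eta^{-1}=\Gamma_1(\frn)$ recorded just before (\ref{EqnStabilization}), I would show that conjugation by $\eta$ carries the double coset $\Gamma_1(\frn)\begin{pmatrix}1&0\\0&\frem\end{pmatrix}\Gamma_1(\frn)$ appearing in (\ref{EqnCoset}) to itself. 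Concretely, $\eta\,\Gamma_1(\frn)\begin{pmatrix}1&0\\0&\frem\end{pmatrix}\Gamma_1(\frn)\,\eta^{-1}=\Gamma_1(\frn)\,\eta\begin{pmatrix}1&0\\0&\frem\end{pmatrix}\eta^{-1}\,\Gamma_1(\frn)$, so it suffices to check that $\eta\begin{pmatrix}1&0\\0&\frem\end{pmatrix}\eta^{-1}$ lies in the \emph{same} double coset as $\begin{pmatrix}1&0\\0&\frem\end{pmatrix}$. This follows because both matrices are integral of determinant $\frem$ with the correct elementary divisor type, and — crucially, to handle the case $\frem\mid\frn$ where the double coset is finer — they are congruent modulo $\frn$ up to left and right multiplication by elements of $\Gamma_1(\frn)$: reducing mod $\frn$, $\eta$ becomes upper triangular with diagonal $(\ast,\fra)$, and one checks $\eta\begin{pmatrix}1&0\\0&\frem\end{pmatrix}\eta^{-1}\equiv\begin{pmatrix}1&\ast\\0&\frem\end{pmatrix}\equiv\begin{pmatrix}1&0\\0&1\end{pmatrix}\begin{pmatrix}1&0\\0&\frem\end{pmatrix}\begin{pmatrix}1&\ast\\0&1\end{pmatrix}\pmod{\frn}$, noting $\frem\equiv 0$ kills the relevant off-diagonal contributions.

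Granting that, if $\{\xi\}$ is a complete set of representatives for $\Gamma_1(\frn)\backslash\Gamma_1(\frn)\begin{pmatrix}1&0\\0&\frem\end{pmatrix}\Gamma_1(\frn)$, then $\{\eta\xi\eta^{-1}\}$ is again such a complete set. Therefore
\[
\langle\alpha\rangle_\frn(T_\frem f) = \Big(\sum_\xi f|_k\xi\Big)\Big|_k\eta = \sum_\xi f|_k(\xi\eta) = \sum_\xi f|_k(\eta\cdot\eta^{-1}\xi\eta) = \sum_\xi (f|_k\eta)|_k(\eta^{-1}\xi\eta),
\]
and replacing the representative $\eta^{-1}\xi\eta$ by running over the reindexed set $\{\eta^{-1}\xi'\eta : \xi'\}$ — which is the same coset space — gives $\sum_{\xi'}(f|_k\eta)|_k\xi' = T_\frem(f|_k\eta) = T_\frem(\langle\alpha\rangle_\frn f)$. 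Here I use that the slash operator is a right action, $(f|_k\gamma)|_k\gamma' = f|_k(\gamma\gamma')$, which is immediate from the definition. One should note that well-definedness of $T_\frem$ (independence of the choice of representatives) is exactly what licenses the reindexing step.

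The main obstacle is the case $\frem\mid\frn$, i.e.\ the $U_\frem$ case: there the double coset in (\ref{EqnCoset}) is a \emph{union of right cosets} that is strictly smaller than the full $\mathit{GL}_2$ double coset, so I must confirm that conjugation by $\eta$ genuinely preserves it rather than merely preserving the coarser invariant (determinant and elementary divisors). This is handled by the mod $\frn$ computation above, using that $\eta$ is upper triangular modulo $\frn$ with the unipotent-type lower-left entry vanishing, so that conjugating $\begin{pmatrix}1&0\\0&\frem\end{pmatrix}$ by $\eta$ stays within $\Gamma_1(\frn)\begin{pmatrix}1&0\\0&\frem\end{pmatrix}\Gamma_1(\frn)$; the coprimality of $\fra$ and $\frn$ guarantees $\eta$ exists and is invertible mod $\frn$. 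Once this mod-$\frn$ bookkeeping is in place, everything else is the formal right-action manipulation above, valid uniformly for $T_\frem$ and $U_\frem$. Finally, commutation with the diamond operators $\langle\beta\rangle_\frn$ for varying $\beta$ is the same argument with $\begin{pmatrix}1&0\\0&\frem\end{pmatrix}$ replaced by $\xi_{\frb,\diamond}$, or simply follows since the $\langle\cdot\rangle_\frn$ form an abelian group.
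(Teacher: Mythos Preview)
Your overall strategy---show that conjugation by $\eta=\eta_{\fra,\diamond}$ preserves the double coset (\ref{EqnCoset}) and then reindex---is exactly what the paper does. But your justification of the key step has a real gap in each case.

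For $\frem\nmid\frn$ you appeal to elementary divisors, claiming this already places $\eta\begin{pmatrix}1&0\\0&\frem\end{pmatrix}\eta^{-1}$ in the correct double coset. That only pins down the $\mathit{SL}_2(A)$ double coset, not the $\Gamma_1(\frn)$ double coset, and these differ: for instance $\begin{pmatrix}\frem&0\\0&1\end{pmatrix}$ has elementary divisors $(1,\frem)$ but does not lie in $\Gamma_1(\frn)\begin{pmatrix}1&0\\0&\frem\end{pmatrix}\Gamma_1(\frn)$ unless $\frem\equiv 1\bmod\frn$, since every matrix in that double coset reduces to $\begin{pmatrix}1&\ast\\0&\frem\end{pmatrix}$ modulo $\frn$. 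So the double coset is ``finer'' here too, contrary to your parenthetical. The paper handles this case by a different device: using surjectivity of $\mathit{SL}_2(A)\to\mathit{SL}_2(A/(\frn))\times\mathit{SL}_2(A/(\frem))$ it chooses $\eta$ with the extra condition $\eta\equiv I\bmod\frem$, after which $\eta^{-1}\begin{pmatrix}1&0\\0&\frem\end{pmatrix}\eta\in\Gamma_1(\frn)\begin{pmatrix}1&0\\0&\frem\end{pmatrix}$ follows by a direct integral computation.

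For $\frem\mid\frn$, your mod-$\frn$ computation is headed in the right direction (though your remark ``$\frem\equiv 0$'' has the divisibility backwards; in any event the congruence $\eta\begin{pmatrix}1&0\\0&\frem\end{pmatrix}\eta^{-1}\equiv\begin{pmatrix}1&\ast\\0&\frem\end{pmatrix}\bmod\frn$ holds simply because $\eta$ is upper-triangular mod $\frn$). The problem is that a congruence modulo $\frn$ is not by itself enough: you must exhibit an \emph{integral} factorization $\eta^{-1}\begin{pmatrix}1&0\\0&\frem\end{pmatrix}\eta=\gamma\begin{pmatrix}1&\beta\\0&\frem\end{pmatrix}$ with $\gamma\in\Gamma_1(\frn)$, and integrality of $\gamma$ (which requires clearing an $\frem$ from a denominator) is precisely where care is needed. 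The paper does this explicitly: writing $\eta=\begin{pmatrix}S&S'\\T&T'\end{pmatrix}$, it chooses $\beta\in A$ with $\beta S\equiv S'\bmod\frn$ (hence also $\bmod\,\frem$) and verifies directly that the resulting $\gamma$ is integral and lies in $\Gamma_1(\frn)$.
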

\begin{proof}
	Let $\frem\in A$ be any monic irreducible polynomial.
	First suppose $\frem\mid \frn$. Write
	\[
	\eta_{\fra, \diamond}=\begin{pmatrix}
		S & S'\\T & T'
	\end{pmatrix}
	\]
	with some $S,S',T,T'\in A$ satisfying $T\equiv 0,\ T'\equiv \fra\bmod \frn$ and $ST'-S'T=1$.
	Since $S$ is prime to $\frn$, there exists $\beta\in A$ satisfying $\beta S\equiv S'\bmod \frn$. Then we have
	\[
	\eta_{\fra,\diamond}^{-1}\begin{pmatrix}
		1 & 0\\0& \frem
	\end{pmatrix}\eta_{\fra,\diamond}\in \Gamma_1(\frn)\begin{pmatrix}
	1 & \beta\\0& \frem
\end{pmatrix}
=\Gamma_1(\frn)\begin{pmatrix}
	1 & 0\\0&\frem
\end{pmatrix}\begin{pmatrix}
	1 & \beta\\0&1
\end{pmatrix},
	\]
	which yields
	\begin{equation}\label{EqnDiaCoset}
		\Gamma_1(\frn)\eta_{\fra,\diamond}^{-1}\begin{pmatrix}
			1 & 0\\0& \frem
		\end{pmatrix}\eta_{\fra,\diamond}\Gamma_1(\frn)=
		\Gamma_1(\frn)\begin{pmatrix}
			1 & 0\\0& \frem
		\end{pmatrix}\Gamma_1(\frn).
	\end{equation}
The lemma in this case follows from this equality.
	
	Next suppose $\frem\nmid \frn$.
	Note that the natural map
	\[
	\mathit{SL}_2(A)\to \mathit{SL}_2(A/(\frn))\times \mathit{SL}_2(A/(\frem))
	\]
	is surjective. Since $\langle \alpha\rangle_\frn$ is independent of the choices of $\fra$ and $\eta_{\fra,\diamond}$, we may assume that $\eta_{\fra,
	\diamond}$ satisfies
	\[
	\eta_{\fra,\diamond}\equiv \begin{pmatrix}
		1 & 0\\0& 1
	\end{pmatrix}\bmod \frem.
	\]
	Then we have
	\[
	\eta_{\fra,\diamond}^{-1}\begin{pmatrix}
		1 & 0\\0& \frem
	\end{pmatrix}\eta_{\fra,\diamond}\in \Gamma_1(\frn)\begin{pmatrix}
		1 & 0\\0& \frem
	\end{pmatrix}
	\]
	and (\ref{EqnDiaCoset}) holds also in this case, which yields the lemma.
	\end{proof}

Let us give an explicit description of the Hecke operator $T_\frem$.
For any $\beta\in A$ satisfying $\deg(\beta)<\deg(\frem)$, put
\[
\xi_{\frem,\beta}=\begin{pmatrix}
	1 & \beta\\0 & \frem
\end{pmatrix}.
\]
When $\frem=t$, we also write $\xi_\beta$ for $\xi_{t,\beta}$.
Then the operator $U_\frem$ for $\frem\mid \frn$ is given by
\[
(U_\frem f)(z)=\sum_{\deg(\beta)<\deg(\frem)} (f|_k\xi_{\frem,\beta})(z)=\frac{1}{\frem}\sum_{\deg(\beta)<\deg(\frem)} f\left(\frac{z+\beta}{\frem}\right).
\]

When $\frem\nmid \frn$,  the set
\[
\{\xi_{\frem, \beta}\mid \deg(\beta)<\deg(\frem)\}\cup \{\xi_{\frem, \diamond}\}
\]
forms a complete set of representatives of the coset space (\ref{EqnCoset}) and
thus
\[
T_\frem f=\sum_{\deg(\beta)<\deg(\frem)} f|_k \xi_{\frem, \beta}+f|_k\xi_{\frem, \diamond}.
\]

\section{$U_t$-operator of level $\Gamma_1(t^n)$}\label{SecU}

Let $k\geq 2$ and $n\geq 1$ be any integers. In the rest of the paper, we assume $\frn=t^n$.

In this section, we study the operator $U_t$ acting on $S_k(\Gamma_1(t^n))$, and prove a criterion, in terms of 
$U_t$, for all Hecke 
operators to act trivially on $S_k^\ord(\Gamma_1(t^n))$ (Theorem \ref{ThmNilpV}).
We denote $S_k(\Gamma_1(t^n))$ and $S_k^{(2)}(\Gamma_1(t^n))$ also by $S_k$ and $S_k^{(2)}$, respectively.

Put $A_n=A/(t^n)$. Let $v_t$ be the $t$-adic valuation on $K$ normalized as $v_t(t)=1$. 
For any $c\in A_{n-1}$, take any lift $\tilde{c}\in A$ of $c$ and put
\[
\bar{v}_t(c)=\min\{v_t(\tilde{c}),n-1\},
\]
which is independent of the choice of $\tilde{c}$.


\subsection{Cusps of $\Gamma_1(t^n)$}\label{SubsecCusps}


For any $c,d\in A_{n-1}$, put
\[
\bar{h}_{(c,d)}=\begin{pmatrix}
	\frac{1}{1+td}&0\\ tc & 1+td
\end{pmatrix}\in \mathit{SL}_2(A_n).
\]
Since the natural map $\mathit{SL}_2(A)\to \mathit{SL}_2(A_n)$ is surjective, 
we can take a lift $h_{(c,d)}\in \Gamma_1(t)$ of $\bar{h}_{(c,d)}$ by this map. Then the set
\[
\{h_{(c,d)}\mid c,d\in A_{n-1}\}
\] 
forms a complete set of representatives of $\Gamma_1(t^n)\backslash \Gamma_1(t)$.

Note that for
\[
SB(A)=\left\{\begin{pmatrix}
	*&*\\0&*
\end{pmatrix}\in\mathit{SL}_2(A)\right\},
\]
the map
\[
\Gamma_1(t)\backslash \mathit{SL}_2(A)/SB(A)\to \Gamma_1(t)\backslash \bP^1(K),\quad \gamma\mapsto \gamma(\infty)
\]
is bijective. Hence we obtain
\[
\Cusps(\Gamma_1(t))=\{\infty,0\}.
\]

Consider the natural map 
\[
\Cusps(\Gamma_1(t^n))\to \Cusps(\Gamma_1(t)).
\]
For $\bullet\in\{\infty,0\}$, we denote by $\Cusps_\bullet(\Gamma_1(t^n))$ the inverse image of $\bullet$ by this map.
Then we have a bijection
\[
\Gamma_1(t^n)\backslash \Gamma_1(t)/\Stab(\Gamma_1(t),\bullet)\to \Cusps_\bullet(\Gamma_1(t^n)),\quad \gamma\mapsto \gamma(\bullet).
\]
From the equalities
\begin{align*}
\Stab(\Gamma_1(t),\infty)&=\left\{\begin{pmatrix}
	1 & b\\0&1
\end{pmatrix} \ \middle|\ b\in A\right\},\\
\Stab(\Gamma_1(t),0)&=\left\{\begin{pmatrix}
	1 & 0\\tc&1
\end{pmatrix} \ \middle|\ c\in A\right\},
\end{align*}
we can show the following lemma.

\begin{lem}\label{LemCusps}
	\begin{enumerate}
		\item\label{LemCusps_infty}
		Let $\Lambda_\infty$ be a subset of $A_{n-1}^2$ which forms a complete set of representatives for the equivalence relation
		\[
		(c,d)\sim (c',d')\Leftrightarrow c=c' \text{ and }d'-d\in cA_{n-1}. 
		\]
		Then the set
		\[
		\{h_{(c,d)}(\infty)\mid (c,d)\in \Lambda_\infty\}
		\]
		forms a complete set of representatives of $\Cusps_\infty(\Gamma_1(t^n))$.
		\item\label{LemCusps_zero} The set
		\[
		\{h_{(0,d)}(0)\mid d\in A_{n-1}\}
		\]
		forms a complete set of representatives of $\Cusps_0(\Gamma_1(t^n))$.
	\end{enumerate}
\end{lem}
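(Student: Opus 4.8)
The plan is to use the two facts highlighted just before the statement: the bijection
\[
\Gamma_1(t^n)\backslash \Gamma_1(t)/\Stab(\Gamma_1(t),\bullet)\ \xrightarrow{\ \sim\ }\ \Cusps_\bullet(\Gamma_1(t^n)),\quad \gamma\mapsto \gamma(\bullet),
\]
together with the fact that $\{h_{(c,d)}\mid c,d\in A_{n-1}\}$ represents $\Gamma_1(t^n)\backslash\Gamma_1(t)$. So for each $\bullet\in\{\infty,0\}$ the task reduces to understanding when two representatives $h_{(c,d)}$ and $h_{(c',d')}$ lie in the same double coset, i.e.\ when
\[
h_{(c,d)}^{-1}\,\Gamma_1(t^n)\,h_{(c',d')}\ \cap\ \Stab(\Gamma_1(t),\bullet)\ \neq\ \emptyset .
\]
Since $h_{(c,d)}$ is a lift of the explicit matrix $\bar h_{(c,d)}\in \mathit{SL}_2(A_n)$, the condition can be checked modulo $t^n$: two representatives are identified iff there is $\bar\gamma\in\mathit{SL}_2(A_n)$ with $\bar\gamma\equiv\left(\begin{smallmatrix}1&*\\0&1\end{smallmatrix}\right)$ such that $\bar h_{(c,d)}^{-1}\bar\gamma\,\bar h_{(c',d')}$ is the reduction of an element of $\Stab(\Gamma_1(t),\bullet)$. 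Working in $\mathit{SL}_2(A_n)$ throughout keeps everything finite and explicit.

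For part (\ref{LemCusps_infty}), first I would compute $\bar h_{(c,d)}(\infty)=\begin{pmatrix}\frac{1}{1+td}\\ tc\end{pmatrix}\sim \begin{pmatrix}1\\ tc(1+td)\end{pmatrix}$ in $\bP^1(A_n)$, so the image in $\Cusps_\infty$ depends, roughly, on $tc$ times a unit, which immediately suggests the first coordinate $c$ is an invariant and the second $d$ only matters up to the indicated relation. To make this precise, I would multiply $\bar h_{(c,d)}$ on the right by the reduction of the generator $\left(\begin{smallmatrix}1&b\\0&1\end{smallmatrix}\right)$ of $\Stab(\Gamma_1(t),\infty)$ and on the left by a general unipotent-mod-$t$ element of $\mathit{SL}_2(A_n)$, and read off exactly which pairs $(c',d')$ arise: a direct matrix computation shows $h_{(c,d)}$ and $h_{(c',d')}$ represent the same cusp iff $c=c'$ and $d'-d\in cA_{n-1}$. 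Hence the double cosets are parametrised by $\Lambda_\infty$, giving (\ref{LemCusps_infty}).

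For part (\ref{LemCusps_zero}), the key observation is that $\Stab(\Gamma_1(t),0)=\left\{\left(\begin{smallmatrix}1&0\\tc&1\end{smallmatrix}\right)\right\}$ is large enough to absorb the parameter $c$: already $\bar h_{(c,d)}=\bar h_{(0,d)}\cdot\begin{pmatrix}1&0\\ tc(1+td)&1\end{pmatrix}$, and the right factor reduces from an element of $\Stab(\Gamma_1(t),0)$, so every $h_{(c,d)}$ lies in the same double coset as some $h_{(0,d)}$. It then remains to see that the $h_{(0,d)}$, $d\in A_{n-1}$, are pairwise inequivalent, i.e.\ that no nontrivial element of $\Stab(\Gamma_1(t),0)$ can be absorbed on the right into $\Gamma_1(t^n)\bar h_{(0,d')}$ while changing $d$; this is again a short computation modulo $t^n$ comparing the lower-left and lower-right entries. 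Combining the two points gives a bijection $A_{n-1}\to\Cusps_0(\Gamma_1(t^n))$, $d\mapsto h_{(0,d)}(0)$, which is (\ref{LemCusps_zero}).

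The only mildly delicate step is the bookkeeping in part (\ref{LemCusps_infty}): one must be careful that the equivalence relation coming out of the matrix computation is \emph{exactly} $(c,d)\sim(c',d')\iff c=c'$ and $d'-d\in cA_{n-1}$, and not something coarser or finer, which means tracking the effect of $d$ through the unit $1+td$ and through conjugation by the left-hand unipotent-mod-$t$ factor. I expect this to be the main obstacle, though it is purely a finite $2\times 2$ computation over $A_n$ and should not present any conceptual difficulty; parts (\ref{LemCusps_zero}) and the surjectivity assertions are routine given the displayed bijections.
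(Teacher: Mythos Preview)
Your approach is exactly the one the paper intends: the paper does not spell out a proof but simply displays the bijection $\Gamma_1(t^n)\backslash\Gamma_1(t)/\Stab(\Gamma_1(t),\bullet)\to\Cusps_\bullet(\Gamma_1(t^n))$ and the two stabilisers, saying ``we can show the following lemma,'' and your proposal is the natural unpacking of this via the representatives $h_{(c,d)}$ and a comparison of lower rows modulo $t^n$.

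One small slip to fix in part (\ref{LemCusps_zero}): the identity should read
\[
\bar h_{(c,d)}=\bar h_{(0,d)}\cdot\begin{pmatrix}1&0\\[2pt] \dfrac{tc}{1+td}&1\end{pmatrix},
\]
not with $tc(1+td)$ in the lower-left entry (just compute $\bar h_{(0,d)}^{-1}\bar h_{(c,d)}$). Since $1+td\in A_n^\times$, the right factor is still the reduction of an element of $\Stab(\Gamma_1(t),0)$, so your argument goes through unchanged.
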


\begin{lem}\label{LemUnif}
	Let $(c,d)$ be any element of $A_{n-1}^2$. Put $m=\bar{v}_t(c)\in [0,n-1]$.
	\begin{enumerate}
		\item\label{LemUnif_inf} For $s=h_{(c,d)}(\infty)$, we have
		\[
		\mathfrak{b}_s=(t^{n-1-m}),\quad u_s(z)=u_{n-1-m}(z)=\frac{1}{t^{n-1-m}}u\left(\frac{z}{t^{n-1-m}}\right).
		\]
		
		\item\label{LemUnif_zero} For $s=h_{(0,d)}(0)$, we have 
		\[
		\mathfrak{b}_s=(t^n),\quad u_s(z)=u_n(z)=\frac{1}{t^n}u\left(\frac{z}{t^n}\right).
		\]
	\end{enumerate}
	
\end{lem}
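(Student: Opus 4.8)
The plan is to compute the relevant fractional ideal $\mathfrak{b}_s$ directly from its definition by choosing a convenient $\nu_s\in\mathit{SL}_2(A)$ and then unwinding the stabilizer condition. For part (\ref{LemUnif_inf}), $s=h_{(c,d)}(\infty)$, so I would take $\nu_s=h_{(c,d)}$. Then, by definition,
\[
\mathfrak{b}_s=\left\{b\in A\ \middle|\ \begin{pmatrix}1&b\\0&1\end{pmatrix}\in h_{(c,d)}^{-1}\Gamma_1(t^n)h_{(c,d)}\right\}
=\left\{b\in A\ \middle|\ h_{(c,d)}\begin{pmatrix}1&b\\0&1\end{pmatrix}h_{(c,d)}^{-1}\in \Gamma_1(t^n)\right\}.
\]
Since $h_{(c,d)}\in\mathit{SL}_2(A)$ always, the only constraint is the congruence mod $t^n$, so I can replace $h_{(c,d)}$ by its reduction $\bar h_{(c,d)}\in\mathit{SL}_2(A_n)$ and compute the conjugate $\bar h_{(c,d)}\left(\begin{smallmatrix}1&b\\0&1\end{smallmatrix}\right)\bar h_{(c,d)}^{-1}$ in $\mathit{SL}_2(A_n)$ explicitly. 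A short matrix multiplication with $\bar h_{(c,d)}=\left(\begin{smallmatrix}(1+td)^{-1}&0\\tc&1+td\end{smallmatrix}\right)$ shows the conjugate has lower-left entry a unit multiple of $-t^2c\,b$ (more precisely $-tc(1+td)^{-1}\cdot b\cdot tc(1+td)$, but the upshot is divisibility by $tc$ times $b$) and upper-left entry $1$ plus a multiple of $tcb$; tracking the exact entries gives that membership in $\Gamma_1(t^n)$ is equivalent to $t^n\mid tc\cdot b$ in $A$-terms, i.e.\ to $\bar v_t(c)+v_t(b)\ge n-1$, hence $b\in(t^{n-1-m})$ with $m=\bar v_t(c)$. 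This identifies $\mathfrak{b}_s=(t^{n-1-m})$ as claimed, and since this is an honest ideal of $A$, the earlier remark (independence of $\nu_s$ once $\mathfrak{b}_s$ is an ideal) shows $u_s(z)=u_{\mathfrak{b}_s}(z)=u_{(t^{n-1-m})}(z)=u_{n-1-m}(z)$ by the definition of $u_l$ in \S\ref{SubsecCU}.

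For part (\ref{LemUnif_zero}), $s=h_{(0,d)}(0)$. Here I would write $\nu_s=h_{(0,d)}w$ with $w=\left(\begin{smallmatrix}0&-1\\1&0\end{smallmatrix}\right)$, so that $\nu_s(\infty)=h_{(0,d)}(w(\infty))=h_{(0,d)}(0)=s$. Note $\bar h_{(0,d)}=\left(\begin{smallmatrix}(1+td)^{-1}&0\\0&1+td\end{smallmatrix}\right)$ is diagonal. Conjugating $\left(\begin{smallmatrix}1&b\\0&1\end{smallmatrix}\right)$ by $w$ gives $\left(\begin{smallmatrix}1&0\\-b&1\end{smallmatrix}\right)$, and then conjugating by the diagonal $\bar h_{(0,d)}$ scales the lower-left entry to $-(1+td)^2b$, a unit multiple of $b$; the diagonal entries stay $1$. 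So $\nu_s^{-1}\Gamma_1(t^n)\nu_s$ contains $\left(\begin{smallmatrix}1&b\\0&1\end{smallmatrix}\right)$ iff $\left(\begin{smallmatrix}1&0\\-(1+td)^2b&1\end{smallmatrix}\right)\in\Gamma_1(t^n)$, i.e.\ iff $t^n\mid b$, giving $\mathfrak{b}_s=(t^n)$ and $u_s(z)=u_n(z)$ as in the statement.

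I expect no serious obstacle here; the only thing to be careful about is bookkeeping the unit factors $(1+td)^{\pm 1}$ and $(1+td)^{\pm 2}$ appearing in the conjugated matrices, and confirming they are genuinely units in $A_n$ (they are, since $td$ is nilpotent), so that they do not affect the $t$-divisibility conditions. A minor point worth stating is that although $h_{(c,d)}$ is only well-defined up to the choice of lift from $\mathit{SL}_2(A_n)$ to $\mathit{SL}_2(A)$, the set $\mathfrak{b}_s$ depends only on the reduction mod $t^n$, so the computation in $\mathit{SL}_2(A_n)$ suffices; and since each $\mathfrak{b}_s$ turns out to be an ideal of $A$, Lemma-independence of the choice of $\nu_s$ (as recorded in \S\ref{SubsecCU}) makes $u_s(z)$ unambiguous.
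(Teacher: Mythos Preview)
Your proposal is correct and follows essentially the same route as the paper: choose $\nu_s=h_{(c,d)}$ (resp.\ $h_{(0,d)}J$), conjugate $\left(\begin{smallmatrix}1&b\\0&1\end{smallmatrix}\right)$, and read off the $t$-divisibility condition modulo $t^n$. One small slip: the lower-left entry of the conjugate in part~(\ref{LemUnif_inf}) is $-t^2c^2b$ (not a unit times $t^2cb$), giving the condition $v_t(b)\ge n-2-2m$; the paper records both this and the diagonal condition $v_t(b)\ge n-1-m$ and observes $\max\{n-1-m,\,n-2-2m\}=n-1-m$, so the diagonal condition you isolate is indeed the binding one and your conclusion stands.
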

\begin{proof}
	For any $x\in A$, the element
	\begin{equation}\label{EqnGammaInf}
		h_{(c,d)}
	\begin{pmatrix}
		1&x\\0&1
	\end{pmatrix}
h_{(c,d)}^{-1}\in \mathit{SL}_2(A)
\end{equation}
is congruent modulo $t^n$ to
\[
\begin{pmatrix}
	1-\frac{tcx}{1+td} & \frac{x}{(1+td)^2}\\
	-t^2c^2x & 1+\frac{tcx}{1+td}
\end{pmatrix}
\]
and thus the element of (\ref{EqnGammaInf}) lies in $\Gamma_1(t^n)$ if and only if
\[
\bar{v}_t(x)\geq \max\{n-1-m,n-2-2m\}=n-1-m,
\]
which yields (\ref{LemUnif_inf}).

For (\ref{LemUnif_zero}), observe
\[
h_{(0,d)}(0)=h_{(0,d)}J(\infty),\quad J=\begin{pmatrix}
	0 & -1\\1& 0
\end{pmatrix}.
\]
Since 
\[
h_{(0,d)}J
\begin{pmatrix}
	1&x\\0&1
\end{pmatrix}
J^{-1}h_{(0,d)}^{-1}\equiv \begin{pmatrix}
	1&0\\-x(1+td)^2&1
\end{pmatrix}\bmod t^n,
\]
the element of the left-hand side lies in $\Gamma_1(t^n)$ if and only if $x\in (t^n)$.
This concludes the proof.
\end{proof}



\subsection{Hecke operators of level $\Gamma_1(t^n)$}\label{SubsecHecketn}


\begin{lem}\label{LemHeckeOp}
	For any $f\in S_k(\Gamma_1(t^n))$, monic irreducible polynomial $\frem\in A$ and $d\in A_{n-1}$, we have
	\[
	(T_\frem f)|_k{h_{(0,d)}}=\left\{\begin{array}{ll}
		\sum_{\deg(\beta)<\deg(\frem)} f|_k{h_{(0,d)}}{\xi_\beta} & (\frem=t),\\
		\sum_{\deg(\beta)<\deg(\frem)} f|_k{h_{(0,d)}}{\xi_{\frem,\beta}}+f|_k{h_{(0,d)}}{\xi_{\frem,\diamond}} & (\frem \neq t).
	\end{array}\right.
	\]
	Moreover, when $\frem\neq t$, we can write
	\[
	(f|_k{h_{(0,d)}}{\xi_{\frem,\diamond}})(z)=\sum_{i\geq 2}c_i u(z)^i,\quad c_i\in \bC_\infty
	\]
	if $|u(z)|$ is sufficiently small.
\end{lem}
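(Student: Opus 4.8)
The plan is to reduce everything to the case $d=0$ by recognizing $h_{(0,d)}$ as a diamond-operator matrix, and then to invoke the explicit formulas for $T_\frem$ recalled in \S\ref{SubsecHeckeOp} together with Lemma \ref{LemCoeffHecke}. The key observation is that, since $c=0$, the matrix $\bar h_{(0,d)}=\begin{pmatrix}(1+td)^{-1}&0\\0&1+td\end{pmatrix}\in\mathit{SL}_2(A_n)$ is upper triangular with lower right entry $1+td$, which is prime to $t^n$; hence any lift $h_{(0,d)}\in\Gamma_1(t)$ is an admissible choice of $\eta_{\fra,\diamond}$ with $\fra=1+td$, so
\[
f|_k h_{(0,d)}=\langle 1+td\rangle_{t^n}f\in S_k(\Gamma_1(t^n)).
\]
By Lemma \ref{LemDiaHecke} the diamond operator commutes with $T_\frem$, whence $(T_\frem f)|_k h_{(0,d)}=\langle 1+td\rangle_{t^n}(T_\frem f)=T_\frem(f|_k h_{(0,d)})$.

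Writing $g=f|_k h_{(0,d)}$ and inserting $g$ into the explicit description of $T_\frem$ then gives the two displayed identities: when $\frem=t$ one has $T_t g=U_t g=\sum_{\deg(\beta)<1}g|_k\xi_\beta$, and when $\frem\neq t$ (so that $\frem\nmid t^n$) one has $T_\frem g=\sum_{\deg(\beta)<\deg(\frem)}g|_k\xi_{\frem,\beta}+g|_k\xi_{\frem,\diamond}$; in both cases $g|_k\xi=f|_k h_{(0,d)}\xi$. For the ``Moreover'' part, note that since $\frem$ is prime to $t^n$, the form $G:=g|_k\eta_{\frem,\diamond}$ again lies in $S_k(\Gamma_1(t^n))$ by (\ref{EqnStabilization}); as $\xi_{\frem,\diamond}=\eta_{\frem,\diamond}\begin{pmatrix}\frem&0\\0&1\end{pmatrix}$, the definition of the slash operator yields $(f|_k h_{(0,d)}\xi_{\frem,\diamond})(z)=\frem^{k-1}G(\frem z)$. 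Since $G$ is a cuspform its expansion at $\infty$ is $G(z)=\sum_{i\geq 1}a_iu(z)^i$ for $|u(z)|$ small, and Lemma \ref{LemCoeffHecke}(\ref{LemCoeffHecke-dia}) gives $u(\frem z)\in u(z)^2A[[u(z)]]$; substituting, every term $u(\frem z)^i$ with $i\geq 1$ contributes only monomials of degree $\geq 2$ in $u(z)$, and the series converges for $|u(z)|$ small. This produces $(f|_k h_{(0,d)}\xi_{\frem,\diamond})(z)=\sum_{i\geq 2}c_iu(z)^i$ with $c_i\in\bC_\infty$.

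I expect no genuine obstacle: the only real idea is the first step, that $h_{(0,d)}$ realizes a diamond operator precisely because the choice $c=0$ makes $\bar h_{(0,d)}$ upper triangular, after which everything is bookkeeping with Lemmas \ref{LemDiaHecke} and \ref{LemCoeffHecke} and the explicit coset representatives. The alternative route of conjugating the double coset defining $T_\frem$ by $h_{(0,d)}$ and re-selecting representatives would also work but is messier.
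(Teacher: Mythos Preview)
Your proof is correct and follows essentially the same route as the paper: both recognize $f|_k h_{(0,d)}=\langle 1+td\rangle_{t^n}f$ and invoke Lemma~\ref{LemDiaHecke} for the first assertion, then write $(f|_k h_{(0,d)}\xi_{\frem,\diamond})(z)=\frem^{k-1}(f|_k h_{(0,d)}\eta_{\frem,\diamond})(\frem z)$ and apply Lemma~\ref{LemCoeffHecke}(\ref{LemCoeffHecke-dia}) for the second. The only cosmetic difference is that the paper verifies directly that the uniformizer at $h_{(0,d)}\eta_{\frem,\diamond}(\infty)$ is $u(z)$ by computing the stabilizer, whereas you obtain the same conclusion more cleanly by citing (\ref{EqnStabilization}) to see that $G=g|_k\eta_{\frem,\diamond}\in S_k(\Gamma_1(t^n))$ and then using the known uniformizer at $\infty$.
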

\begin{proof}
	Since $f|_k{h_{(0,d)}}=\langle 1+td\rangle_{t^n}f$, Lemma \ref{LemDiaHecke} shows the former assertion.
	
	Let us show the latter assertion for $\frem\neq t$. We have
	\[
	(f|_k{h_{(0,d)}}{\xi_{\frem,\diamond}})(z)=\frem^{k-1}(f|_k{h_{(0,d)}\eta_{\frem,\diamond}})(\frem z).
	\]
	For any $x\in A$, observe
	\[
	h_{(0,d)}\eta_{\frem,\diamond}\begin{pmatrix}
		1 & x\\0& 1
	\end{pmatrix}
	(h_{(0,d)}\eta_{\frem,\diamond})^{-1}\in \Gamma_1(t^n),
	\]
	which shows that the uniformizer at the cusp $h_{(0,d)}\eta_{\frem,\diamond}(\infty)$ is $u(z)$. Then we can write
	\[
	(f|_k{h_{(0,d)}\eta_{\frem,\diamond}})(z)=\sum_{i\geq 1} b_iu(z)^i,\quad b_i\in \bC_\infty,
	\]
	and the assertion follows from Lemma \ref{LemCoeffHecke} (\ref{LemCoeffHecke-dia}).
\end{proof}

\begin{lem}\label{LemUtCusp}
	Let $\beta\in\bF_q$ and $(c,d)\in A_{n-1}^2$ be any elements.
	\begin{enumerate}
		\item\label{LemUtCusp-inf} $\xi_\beta h_{(c,d)}\in \Gamma_1(t^n)h_{(tc,d-\beta c)}\xi_\beta$.
		\item\label{LemUtCusp-inf0}
		If $\beta\neq 0$, then 
		\[
		\xi_\beta h_{(c,d)}J\in \Gamma_1(t^n)h_{(\beta^{-1}(1+td),d-\beta c)}\begin{pmatrix}
			1 &0 \\ 0& t
		\end{pmatrix}
	\begin{pmatrix}
		\beta& -1\\0&\beta^{-1}
	\end{pmatrix}.
		\]
		\item\label{LemUtCusp-0}
		$\xi_0 h_{(c,d)}J\in \Gamma_1(t^n)h_{(tc,d)}J\begin{pmatrix}
			t &0 \\ 0& 1
		\end{pmatrix}$.
	\end{enumerate}
\end{lem}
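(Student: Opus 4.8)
The statement is three matrix identities in $\mathit{SL}_2(A)$, up to left multiplication by $\Gamma_1(t^n)$. My plan is to verify each by a direct computation: compute the product on the left, compute the product on the right, and exhibit the intertwining matrix as an explicit element congruent to $\begin{pmatrix}1&*\\0&1\end{pmatrix}\bmod t^n$. The key observation making this routine is that $h_{(c,d)}$ is, by definition, a fixed lift in $\Gamma_1(t)$ of the matrix $\bar h_{(c,d)}=\begin{pmatrix}\frac{1}{1+td}&0\\tc&1+td\end{pmatrix}\in\mathit{SL}_2(A_n)$, so every claimed identity need only be checked modulo $t^n$; once it holds in $\mathit{SL}_2(A_n)$ with the connecting matrix reducing into $\Gamma_1(t^n\!\!\mod t^n)$, the desired identity in $\mathit{SL}_2(A)$ follows because any two lifts of the same element of $\mathit{SL}_2(A_n)$ differ by an element of the kernel $\Gamma(t^n)\subseteq\Gamma_1(t^n)$.

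For part \eqref{LemUtCusp-inf}: multiply out $\xi_\beta\bar h_{(c,d)}=\begin{pmatrix}1&\beta\\0&t\end{pmatrix}\begin{pmatrix}\frac{1}{1+td}&0\\tc&1+td\end{pmatrix}$ and, on the other side, $\bar h_{(tc,d-\beta c)}\xi_\beta$, and check that the product $(\bar h_{(tc,d-\beta c)}\xi_\beta)^{-1}(\xi_\beta\bar h_{(c,d)})$ is upper triangular unipotent modulo $t^n$; the relation $\bar v_t(tc)=\min\{\bar v_t(c)+1,n-1\}$ and the fact that the $t$-valuation of the new lower-left entry only increases ensure everything lands in $\Gamma_1(t^n)$. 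Parts \eqref{LemUtCusp-inf0} and \eqref{LemUtCusp-0} are handled the same way, inserting $J=\begin{pmatrix}0&-1\\1&0\end{pmatrix}$ and factoring off the diagonal/antidiagonal matrices on the right: one computes $\xi_\beta\bar h_{(c,d)}J$ explicitly, recognizes that its lower-left entry becomes divisible by the appropriate power of $t$ exactly when $\beta\neq0$ (resp. $\beta=0$) so that the residual factor is $\begin{pmatrix}1&0\\0&t\end{pmatrix}\begin{pmatrix}\beta&-1\\0&\beta^{-1}\end{pmatrix}$ (resp. $J\begin{pmatrix}t&0\\0&1\end{pmatrix}$), and the remaining prefactor reduces to $\bar h_{(\ast,\ast)}$ with the stated parameters modulo a left $\Gamma_1(t^n)$-translate.

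The only genuine subtlety—and thus the main thing to be careful about—is bookkeeping the $t$-adic valuations of the new parameters $c'=tc$, $c'=\beta^{-1}(1+td)$ (a unit when $\beta\neq0$), etc., and checking that the connecting matrix really reduces into $\Gamma_1(t^n)$ rather than merely into $\mathit{SL}_2(A)$: this requires that its lower-left entry be divisible by $t^n$ and its diagonal entries be $\equiv1$. These divisibilities hold because multiplication by $\xi_\beta=\begin{pmatrix}1&\beta\\0&t\end{pmatrix}$ scales the bottom row by $t$, which is exactly what compensates the drop when one reads off $\bar h_{(c',d')}$. There is no deep content here—the lemma is purely a normalization bookkeeping step preparing the computation of $U_t$ on Fourier expansions at the cusps in the next subsection—so I would simply carry out the three computations and check the congruences, citing the surjectivity of $\mathit{SL}_2(A)\to\mathit{SL}_2(A_n)$ (already invoked for the existence of $h_{(c,d)}$) to pass from $A_n$ back to $A$.
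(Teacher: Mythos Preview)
Your overall plan—direct matrix computation—is exactly what the paper does, and for a lemma of this type there is really nothing else to do. However, your proposed shortcut of working entirely modulo $t^n$ has a genuine gap.

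The matrices $\xi_\beta$, $\begin{pmatrix}1&0\\0&t\end{pmatrix}$, $J\begin{pmatrix}t&0\\0&1\end{pmatrix}$ all have determinant $t$, so they are not invertible in $M_2(A_n)$. In particular, the expression $(\bar h_{(tc,d-\beta c)}\xi_\beta)^{-1}(\xi_\beta\bar h_{(c,d)})$ you propose to compute does not exist in $M_2(A_n)$, and the lifting principle ``two lifts of the same element of $\mathit{SL}_2(A_n)$ differ by $\Gamma(t^n)$'' does not apply because the identity is not between elements of $\mathit{SL}_2$. More concretely: if $L,R\in M_2(A)$ both have determinant $t$ and $L\equiv R\bmod t^n$, then $LR^{-1}=\frac{1}{t}L\,\mathrm{adj}(R)\equiv I$ only modulo $t^{n-1}$, not modulo $t^n$. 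So a mod-$t^n$ check alone lands you in $\Gamma_1(t^{n-1})$, one power short.

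The paper avoids this by writing $h_{(c,d)}=\begin{pmatrix}P&t^nQ\\tR&S\end{pmatrix}$ with $P,Q,R,S\in A$ and computing, e.g.\ for part~(1), the conjugate $\xi_\beta h_{(c,d)}\xi_\beta^{-1}$ \emph{exactly} over $A$. The integrality of this conjugate is not automatic from the definitions—it uses $S\equiv P\bmod t$ (i.e.\ $h_{(c,d)}\in\Gamma_1(t)$) so that $(S-P)/t\in A$. Once one knows this conjugate lies in $\Gamma_1(t)$, its $\Gamma_1(t^n)$-coset is indeed determined by its bottom row modulo $t^n$, which one reads off as $(t^2c,\,1+t(d-\beta c))$. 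Parts (2) and (3) follow the same pattern: factor the left-hand side over $A$ into (matrix in $\Gamma_1(t)$)$\times$(the stated right factor), then read off the bottom row mod $t^n$. So your plan becomes correct as soon as you replace ``compute with $\bar h_{(c,d)}$ in $A_n$'' by ``compute with a generic lift $\begin{pmatrix}P&t^nQ\\tR&S\end{pmatrix}$ in $A$ and check integrality first.''
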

\begin{proof}
	Write
	\[
	h_{(c,d)}=\begin{pmatrix}
		P & t^n Q\\ tR & S
	\end{pmatrix},\quad P,Q,R,S\in A. 
	\]
	Since $S\equiv P\bmod t$, we have $t^{-1}(S-P)\in A$ and the element
	\[
	\xi_\beta h_{(c,d)}\xi_\beta^{-1}=\begin{pmatrix}
		P+t\beta R & \beta\left(\frac{S-P}{t}\right)-\beta^2 R+t^{n-1}Q\\
		t^2 R & S-t\beta R
		\end{pmatrix}\in \Gamma_1(t)
	\]
	satisfies
	\[
	\xi_\beta h_{(c,d)}\xi_\beta^{-1} \equiv \begin{pmatrix}
		* & *\\
		t^2 c & 1+t(d-\beta c)
	\end{pmatrix}\bmod t^n,
	\]
	which shows (\ref{LemUtCusp-inf}).
	
	For (\ref{LemUtCusp-inf0}), the matrix $\xi_\beta h_{(c,d)}J$ equals
	\[
	\begin{pmatrix}
		t^n\beta^{-1} Q+S& \beta\left(\frac{S-P}{t}\right)+t^{n-1}Q-\beta^2 R\\t\beta^{-1}S&S-t\beta R
	\end{pmatrix}\begin{pmatrix}
		1& 0\\0&t
	\end{pmatrix}\begin{pmatrix}
		\beta& -1\\0&\beta^{-1}
	\end{pmatrix}.
\]
	The first matrix lies in $\Gamma_1(t)$, and it is congruent modulo $t^n$ to
	\[
	\begin{pmatrix}
		*& * \\t\beta^{-1}(1+td)& 1+t(d-\beta c)
	\end{pmatrix}.
	\]
	Hence this matrix is contained in $\Gamma_1(t^n)h_{(\beta^{-1}(1+td),d-\beta c)}$ and (\ref{LemUtCusp-inf0}) follows.
	
	For (\ref{LemUtCusp-0}), the matrix $\xi_0 h_{(c,d)}J$ equals
	\begin{align*}
		\begin{pmatrix}
			1& 0\\0&t
		\end{pmatrix}
		\begin{pmatrix}
			P& t^n Q\\tR&S
		\end{pmatrix}
		\begin{pmatrix}
			0& -1\\1&0
		\end{pmatrix}=\begin{pmatrix}
			P& t^{n-1} Q\\t^2 R&S
		\end{pmatrix}J\begin{pmatrix}
			t& 0\\0&1
		\end{pmatrix}.
	\end{align*}
	The first matrix of the right-hand side lies in $\Gamma_1(t)$, and it is congruent modulo $t^n$ to
	\[
	\begin{pmatrix}
		*& * \\t^2c& 1+td
	\end{pmatrix},
	\]
	from which (\ref{LemUtCusp-0}) follows.
\end{proof}

\begin{lem}\label{LemDiamond}
	Let $a,c,d\in A_{n-1}$ be any elements. Take any lift $\fra\in A$ of $1+ta\in A_n$.
	Then we have
	\[
	\eta_{\fra,\diamond}h_{(c,d)}\in \Gamma_1(t^n)h_{((1+ta)c,a+d+tad)}.
	\]
\end{lem}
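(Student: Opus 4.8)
The plan is to check the asserted membership directly after reduction modulo $t^n$. Since $\Gamma_1(t^n)$ is precisely the preimage of $\left\{\begin{pmatrix}1&*\\0&1\end{pmatrix}\right\}$ under the surjection $\mathit{SL}_2(A)\to\mathit{SL}_2(A_n)$, it suffices to prove that $\eta_{\fra,\diamond}\,h_{(c,d)}\,h_{((1+ta)c,\,a+d+tad)}^{-1}$ has unipotent upper triangular image in $\mathit{SL}_2(A_n)$. By the defining congruence of $\eta_{\fra,\diamond}$, together with $\fra\equiv 1+ta\bmod t^n$ and $1+ta\in A_n^\times$, the image $\bar\eta_{\fra,\diamond}$ of $\eta_{\fra,\diamond}$ in $\mathit{SL}_2(A_n)$ has the shape $\begin{pmatrix}(1+ta)^{-1}&s'\\0&1+ta\end{pmatrix}$ for some $s'\in A_n$, the top-left entry being forced by $\det=1$; the value of $s'$ will be irrelevant.

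The single algebraic input I would isolate is the identity $1+t(a+d+tad)=(1+ta)(1+td)$ in $A_n$. Left multiplication of $\bar{h}_{(c,d)}=\begin{pmatrix}(1+td)^{-1}&0\\tc&1+td\end{pmatrix}$ by $\bar\eta_{\fra,\diamond}$ scales its bottom row by $1+ta$, so the $(2,1)$- and $(2,2)$-entries of $\bar\eta_{\fra,\diamond}\bar{h}_{(c,d)}$ are $t(1+ta)c$ and $(1+ta)(1+td)=1+t(a+d+tad)$, which are exactly the bottom-row entries of $\bar{h}_{((1+ta)c,\,a+d+tad)}$. Using that the inverse of $\begin{pmatrix}\alpha&0\\\gamma&\alpha^{-1}\end{pmatrix}$ is $\begin{pmatrix}\alpha^{-1}&0\\-\gamma&\alpha\end{pmatrix}$, I would then multiply out $\bar\eta_{\fra,\diamond}\,\bar{h}_{(c,d)}\,\bar{h}_{((1+ta)c,\,a+d+tad)}^{-1}$ in $\mathit{SL}_2(A_n)$; a short cancellation (the two $s'tc$-terms cancel and the remaining product telescopes) shows it equals $\begin{pmatrix}1&s'(1+ta)^{-1}\\0&1\end{pmatrix}$, so that $\eta_{\fra,\diamond}\,h_{(c,d)}\,h_{((1+ta)c,\,a+d+tad)}^{-1}\in\Gamma_1(t^n)$, which proves the lemma.

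This is a bookkeeping computation with no genuine obstacle. The only things to watch are that $\eta_{\fra,\diamond}$ is pinned down only modulo $t^n$ and only up to its upper row, so every step must take place in $\mathit{SL}_2(A_n)$ and be independent of $s'$; and that one should perform the normalization of $\bar\eta_{\fra,\diamond}$ and invoke the identity $1+t(a+d+tad)=(1+ta)(1+td)$ before comparing entries, which is what makes the indices $(1+ta)c$ and $a+d+tad$ in the statement emerge.
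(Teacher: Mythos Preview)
Your proposal is correct and is essentially the paper's argument carried out in full. The paper proceeds more briskly: it observes that $\fra\equiv 1\bmod t$ forces $\eta_{\fra,\diamond}\in\Gamma_1(t)$, and since the $h_{(c,d)}$ are a complete set of representatives for $\Gamma_1(t^n)\backslash\Gamma_1(t)$ indexed by their bottom row modulo $t^n$, it suffices to compute that bottom row of $\eta_{\fra,\diamond}h_{(c,d)}$, which is $(t(1+ta)c,\ (1+ta)(1+td))$---exactly your step~3. You instead multiply through by $\bar{h}_{((1+ta)c,\,a+d+tad)}^{-1}$ and verify the result is unipotent upper triangular; this is the same content unpacked, with the advantage of being self-contained (not relying on the coset-representative setup) at the cost of a little extra algebra.
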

\begin{proof}
	Since $\fra\equiv 1\bmod t$, the matrix $\eta_{\fra,\diamond}$ lies in $\Gamma_1(t)$. 
	Thus the lemma follows from
	\begin{align*}
		\eta_{\fra,\diamond}h_{(c,d)}&\equiv \begin{pmatrix}
	* & *\\
	t(1+ta)c & (1+ta)(1+td)
\end{pmatrix}\bmod t^n.
	\end{align*}
\end{proof}


\subsection{Unramified double cuspforms}\label{SubsecDDC}

Put
\[
S'_k=\{f\in S_k\mid \ord\left(h_{(0,d)}(\infty),f\right)\geq 2\text{ for any }d\in A_{n-1}\}.
\]

\begin{lem}\label{LemPDCStable}
	$S'_k$ is stable under all Hecke operators.
\end{lem}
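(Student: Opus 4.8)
The plan is to verify stability under each generator $T_\frem$ of the Hecke algebra, i.e.\ under $T_\frem$ for every monic irreducible $\frem\in A$ (the case $\frem=t$ being $U_t$). By definition $S'_k$ is cut out inside $S_k$ by the vanishing of the coefficient of $u(z)^1$ in the Fourier expansion at each of the finitely many cusps $h_{(0,d)}(\infty)$, $d\in A_{n-1}$ (these are all unramified, so the relevant uniformizer is $u(z)$). Hence it suffices to show that, for $f\in S'_k$, monic irreducible $\frem$, and $d\in A_{n-1}$, the Fourier expansion of $T_\frem f$ at $s:=h_{(0,d)}(\infty)$ has no term of degree $\leq 1$. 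I would obtain this from the explicit description of $T_\frem$ in Lemma \ref{LemHeckeOp} together with the Goss-polynomial estimates of Lemma \ref{LemCoeffHecke}.

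Concretely, fix such $f$, $\frem$, $d$ and put $g=f|_k h_{(0,d)}$. Since $\bar{v}_t(0)=n-1$, Lemma \ref{LemUnif}(\ref{LemUnif_inf}) shows that the uniformizer at $s$ for $\Gamma_1(t^n)$ is $u_s(z)=u(z)$; as $h_{(0,d)}\in\mathit{SL}_2(A)$ and $h_{(0,d)}(\infty)=s$, the series $g$ is the Fourier expansion of $f$ at $s$, and the hypothesis $f\in S'_k$ says
\[
g(z)=\sum_{i\geq 2}a_i\,u(z)^i,\qquad a_i\in\bC_\infty,
\]
for $|u(z)|$ sufficiently small. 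By Lemma \ref{LemHeckeOp},
\[
(T_\frem f)|_k h_{(0,d)}=\sum_{\deg(\beta)<\deg(\frem)}g|_k\xi_{\frem,\beta}+\delta,\qquad
\delta=\begin{cases}0,&\frem=t,\\ f|_k h_{(0,d)}\xi_{\frem,\diamond},&\frem\neq t,\end{cases}
\]
and $\det(\xi_{\frem,\beta})=\frem$ gives $(g|_k\xi_{\frem,\beta})(z)=\frem^{-1}g\!\left(\tfrac{z+\beta}{\frem}\right)$. Substituting the expansion of $g$ and interchanging the two summations,
\[
\sum_{\deg(\beta)<\deg(\frem)}(g|_k\xi_{\frem,\beta})(z)=\frem^{-1}\sum_{i\geq 2}a_i\sum_{\deg(\beta)<\deg(\frem)}u\!\left(\tfrac{z+\beta}{\frem}\right)^{\!i},
\]
and Lemma \ref{LemCoeffHecke}(\ref{LemCoeffHecke-beta2}) puts each inner sum in $\frem\,u(z)^2A[u(z)]$, hence the whole expression in $u(z)^2\bC_\infty[[u(z)]]$. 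When $\frem\neq t$, the last assertion of Lemma \ref{LemHeckeOp} places $\delta$ in $u(z)^2\bC_\infty[[u(z)]]$ as well. Therefore $(T_\frem f)|_k h_{(0,d)}\in u(z)^2\bC_\infty[[u(z)]]$, i.e.\ $\ord(s,T_\frem f)\geq 2$; since $d$ was arbitrary, $T_\frem f\in S'_k$.

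The only point needing care is the interchange of $\sum_i$ and $\sum_\beta$ and the assertion that the resulting formal power series is the genuine Fourier expansion of $(T_\frem f)|_k h_{(0,d)}$ at $s$; but this is the same manipulation already carried out in the proof of Lemma \ref{LemCoeffHecke}, so nothing new arises — the only structural inputs are Lemma \ref{LemCoeffHecke}(\ref{LemCoeffHecke-beta2}), which treats $\frem=t$ and $\frem\neq t$ uniformly, and the vanishing of the constant and linear terms of $G_{i,\frem}$ for $i\geq 2$. For completeness I would also note, although it is not used in the sequel, that $S'_k$ is stable under the diamond operators $\langle 1+ta\rangle_{t^n}$: by Lemma \ref{LemDiamond} this operator carries $g=f|_k h_{(0,d)}$ to $f|_k h_{(0,a+d+tad)}$ up to left multiplication by an element of $\Gamma_1(t^n)$, and $d\mapsto a+d+tad$ is a bijection of $A_{n-1}$.
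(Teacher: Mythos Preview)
Your proof is correct and follows essentially the same approach as the paper: expand $f|_k h_{(0,d)}$ as a power series in $u(z)$ starting at $i\geq 2$, invoke Lemma~\ref{LemHeckeOp} to decompose $(T_\frem f)|_k h_{(0,d)}$, apply Lemma~\ref{LemCoeffHecke}(\ref{LemCoeffHecke-beta2}) to kill the linear term in the $\xi_{\frem,\beta}$-sum, and use the latter assertion of Lemma~\ref{LemHeckeOp} for the extra $\xi_{\frem,\diamond}$-term when $\frem\neq t$. Your additional remarks on the interchange of sums and on stability under diamond operators are harmless elaborations not present in the paper.
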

\begin{proof}
	Let $f$ be any element of $S'_k$ and $\frem\in A$ any monic irreducible polynomial. 
	By Lemma \ref{LemUnif} (\ref{LemUnif_inf}) the uniformizer at the cusp $h_{(0,d)}(\infty)$ is $u(z)$ and we can write
\[
(f|_k{h_{(0,d)}})(z)=\sum_{i\geq 2} a_i u(z)^i,\quad a_i\in \bC_\infty.
\]
Then Lemma \ref{LemCoeffHecke} (\ref{LemCoeffHecke-beta2}) shows that the term 
	\[
	\sum_{\deg(\beta)<\deg(\frem)} f|_k{h_{(0,d)}}{\xi_{\frem,\beta}}
	\]
	in the equality of Lemma \ref{LemHeckeOp} has 
	no 
	linear term 
	of $u(z)$.
	Thus the lemma follows from the latter assertion of Lemma \ref{LemHeckeOp}.
\end{proof}

For any $f\in S_k$ and $d\in A_{n-1}$, we write
\[
(f|_kh_{(0,d)})(z)=\sum_{i\geq 1} a_iu(z)^i,\quad a_i\in \bC_\infty
\]
and put $L_d(f)=a_1$.
Then the $\bC_\infty$-linear map
\[
L:S_k/S'_k\to \bigoplus_{d\in A_{n-1}}\bC_\infty, \quad f\mapsto (L_d(f))_d
\]
is injective.

\begin{lem}\label{LemQuotDim}
	\[
	\dim_{\bC_\infty} S_k/S'_k =q^{n-1}.
	\]
	In particular, the map $L$ is bijective.
\end{lem}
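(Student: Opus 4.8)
Since the map $L$ is already known to be injective, it suffices to prove that it is surjective, or equivalently that $\dim_{\bC_\infty} S_k/S'_k \geq q^{n-1}$; combined with injectivity this forces equality and bijectivity. The plan is to exhibit, for each $d\in A_{n-1}$, a cuspform $f\in S_k$ with $L_{d'}(f)\neq 0$ precisely when $d'=d$, or more realistically to show that the collection of vectors $(L_{d'}(f))_{d'}$ as $f$ ranges over $S_k$ spans $\bigoplus_{d\in A_{n-1}}\bC_\infty$. The natural source of such forms is the classical construction of Eisenstein-type or Poincar\'e-type series, or more elementarily the forms obtained by pushing forward a fixed nonzero cuspform of level $\Gamma_1(t)$ under the trace/averaging maps and twisting by diamond operators. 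Concretely, I would start from a nonzero $g\in S_k(\Gamma_1(t))$ (such exists for every $k\geq 2$ as $\dim S_k(\Gamma_1(t))\geq 1$), pull it back to level $\Gamma_1(t^n)$, and consider the orbit $\{g|_k h_{(0,d)}=\langle 1+td\rangle_{t^n}g : d\in A_{n-1}\}$ together with the action of the full diamond group $\Theta_n$.

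The key computational input is the behaviour of $L_d$ under the diamond operators, which is governed by Lemma \ref{LemDiamond}: for a lift $\fra$ of $1+ta$, we have $\eta_{\fra,\diamond}h_{(0,d)}\in \Gamma_1(t^n)h_{(0,a+d+tad)}$, so $L_d(\langle 1+ta\rangle_{t^n}f)=L_{a+d+tad}(f)$ up to the automorphy factor, which is a unit. In other words, the group $\Theta_n=1+tA/t^nA$ acts on the index set $\{h_{(0,d)}(\infty)\mid d\in A_{n-1}\}$, and by Lemma \ref{LemCusps}(\ref{LemCusps_zero}) this set has exactly $q^{n-1}$ elements and the action is simply transitive (the map $d\mapsto 1+td$ identifies $A_{n-1}$ with $\Theta_n$, and $a\mapsto (1+ta)(1+td)$ is the group law). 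Hence $L$ intertwines the $\Theta_n$-action on $S_k$ with the regular representation of $\Theta_n$ on $\bC_\infty^{q^{n-1}}\cong\bC_\infty[\Theta_n]$. Therefore the image of $L$ is a $\bC_\infty[\Theta_n]$-submodule of the regular representation, and it is nonzero as soon as $S_k\neq S'_k$; since the only $\bC_\infty[\Theta_n]$-submodules of $\bC_\infty[\Theta_n]$ containing a vector with a nonzero coordinate at $d$ must, after averaging over the group, contain the full span — more precisely, once the image is nonzero I would argue it is all of $\bC_\infty[\Theta_n]$ by noting that $\bC_\infty[\Theta_n]$ is generated by the idempotents or simply that any nonzero cyclic $\bC_\infty[\Theta_n]$-submodule generated by an element with a nonzero coordinate surjects onto each coordinate via the translation action.

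The main obstacle is producing a single element of $S_k\setminus S'_k$, i.e.\ showing that $S_k/S'_k$ is nonzero at all: the $\Theta_n$-representation-theoretic argument only bootstraps from nonvanishing to fullness. For this I would use the existence of a cuspform with nonzero first Fourier coefficient at the cusp $0$ of $\Gamma_1(t)$, or invoke the $n=1$ case where $S_k'=S_k^{(2)}$ and $S_k/S_k^{(2)}$ is visibly two-dimensional (the cusps $\infty$ and $0$ both contribute), already giving the result for $n=1$; for general $n$ one pulls back such a form and observes that its Fourier expansion at $h_{(0,0)}(\infty)$ still has a nonzero linear term, which follows from Lemma \ref{LemUnif}(\ref{LemUnif_inf}) (the uniformizer at $h_{(0,0)}(\infty)$ is again $u(z)$, matching that at the cusp $0$ of $\Gamma_1(t)$ after the appropriate change of variables). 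Once this single nonvanishing is established, the $\Theta_n$-equivariance of $L$ upgrades it to surjectivity and the lemma follows.
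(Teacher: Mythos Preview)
Your approach has a genuine gap at the crucial step. You correctly observe that $L$ is $\Theta_n$-equivariant (with the target identified with the regular representation $\bC_\infty[\Theta_n]$), but the claim that a nonzero $\bC_\infty[\Theta_n]$-submodule of $\bC_\infty[\Theta_n]$ must be the whole ring is false. Since $\Theta_n$ is a $p$-group and $\bC_\infty$ has characteristic $p$, the group algebra $\bC_\infty[\Theta_n]$ is a local Artinian ring (isomorphic to a truncated polynomial ring), not a semisimple one; it has no nontrivial idempotents and plenty of proper nonzero ideals. Concretely, the element $\sum_{\alpha\in\Theta_n}\alpha$ has a nonzero coordinate at every $d$ and yet generates only the one-dimensional invariant line. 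Your proposed source of nonzero elements---a form $g$ pulled back from $S_k(\Gamma_1(t))$---lands exactly on this line: since $h_{(0,d)}\in\Gamma_1(t)$ we get $L_d(g)=L_0(g)$ for all $d$, so $L(g)$ is the constant vector, and its $\Theta_n$-orbit is itself. Thus your argument, as written, establishes only $\dim S_k/S'_k\geq 1$. (Incidentally, for $n=1$ one has $S'_k\supsetneq S_k^{(2)}$ in general: $S'_k$ imposes double vanishing only at $\infty$, not at $0$.)

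To make the representation-theoretic route work you would need to produce an $f$ with $L(f)$ a \emph{unit} in $\bC_\infty[\Theta_n]$, for instance an $f$ with $L_d(f)\neq 0$ for exactly one $d$; this is essentially as hard as the lemma itself. The paper instead bypasses the issue entirely with a Riemann--Roch computation: it identifies $S'_k$ with $H^0(X_1(t^n)_{\bC_\infty},\bar{\omega}^{\otimes k}(-D))$ for $D=\Cusps+\Cusps'$, checks the degree is at least $2g-1$, and reads off $\dim S'_k=\dim S_k-q^{n-1}$ directly.
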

\begin{proof}
	We denote $\Cusps(\Gamma_1(t^n))$ also by $\Cusps$. 
	By Lemma \ref{LemCusps} (\ref{LemCusps_infty}), the points
	\[
	h_{(0,d)}(\infty),\quad d\in A_{n-1}
	\]
	form a subset $\Cusps'$ of cardinality $q^{n-1}$ of $\Cusps$. 
	We abusively identify $\Cusps$ and $\Cusps'$ with the reduced divisors they define on the 
	Drinfeld modular curve $X_1(t^n)_{\bC_\infty}$ over $\bC_\infty$, and
	put $D=\Cusps+\Cusps'$.
	Let $g$ be the genus of $X_1(t^n)_{\bC_\infty}$ and $h$ the 
	number of cusps. Since $0\in \Cusps\setminus \Cusps'$, we have $h>q^{n-1}$.
	
	Let $\bar{\omega}$ be the Hodge bundle on $X_1(t^n)_{\bC_\infty}$,
	so that $\deg(\bar{\omega}^{\otimes 2})=2g-2+2h$ and $\deg(\bar{\omega})\geq 0$ (see for example \cite[Corolary 4.2]{Ha_HTT} with $\Delta=\{1\}$).
	For $k\geq 2$, we have
	\begin{align*}
	\deg(\bar{\omega}^{\otimes k}(-D))&=k\deg(\bar{\omega})-\deg(D)\\
	&=(k-2)\deg(\bar{\omega})+2g-2+h-q^{n-1}\geq 2g-1.
	\end{align*}
	Since $S'_k$ can be identified with $H^0(X_1(t^n)_{\bC_\infty},\bar{\omega}^{\otimes k}(-D))$, the Riemann-Roch theorem implies
	\[
	\dim_{\bC_\infty} S'_k=\deg(\bar{\omega}^{\otimes k}(-D))+1-g=(k-1)(g-1+h)-q^n.
	\]
	From $\dim_{\bC_\infty}S_k=(k-1)(g-1+h)$ \cite[Proposition 5.4]{Boeckle}, we obtain $\dim_{\bC_\infty}S_k/S'_k=q^{n-1}$. Since the both sides of the 
	injection $L$ 
	have the same 
	dimension, it 
	is a bijection.
\end{proof}

\begin{lem}\label{LemQuotTriv}
	All Hecke operators act trivially on $S_k/S'_k$.
\end{lem}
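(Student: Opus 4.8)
The plan is to compute, for each monic irreducible $\frem$, the effect of $T_\frem$ on the coordinates $L_d$ of $S_k/S'_k$ and to recognize the resulting matrix as the identity, which by the bijectivity of $L$ in Lemma \ref{LemQuotDim} proves triviality. Concretely, fix $f\in S_k$ and $d\in A_{n-1}$ and expand $(T_\frem f)|_k h_{(0,d)}$ in powers of $u(z)$; we only care about the linear coefficient, i.e.\ $L_d(T_\frem f)$.

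First I would treat the case $\frem\neq t$. By Lemma \ref{LemHeckeOp}, $(T_\frem f)|_k h_{(0,d)}$ splits as $\sum_{\deg(\beta)<\deg(\frem)} f|_k h_{(0,d)}\xi_{\frem,\beta}+f|_k h_{(0,d)}\xi_{\frem,\diamond}$, and the same lemma tells us the last term has no linear term in $u(z)$. For the sum, write $(f|_k h_{(0,d)})(z)=\sum_{i\geq 1} a_i u(z)^i$; since $u_s(z)=u(z)$ at the cusp $h_{(0,d)}(\infty)$ by Lemma \ref{LemUnif}(\ref{LemUnif_inf}), the operator $\sum_{\deg(\beta)<\deg(\frem)} (\cdot)|_k\xi_{\frem,\beta}$ sends $u(z)^i$ to $\sum_{\deg(\beta)<\deg(\frem)} u\!\left(\frac{z+\beta}{\frem}\right)^i$ (up to the scalar $\frem^{k-1}\cdot\frem^{-k}=\frem^{-1}$ from the slash, which is absorbed into the normalization of Lemma \ref{LemCoeffHecke}(\ref{LemCoeffHecke-beta1})). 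By Lemma \ref{LemCoeffHecke}(\ref{LemCoeffHecke-beta1}) and (\ref{LemCoeffHecke-beta2}), the $i=1$ term contributes $a_1 u(z)$ and all $i\geq 2$ terms contribute no linear term. Hence $L_d(T_\frem f)=a_1=L_d(f)$, so $T_\frem$ acts as the identity on $S_k/S'_k$.

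Next I would handle $\frem=t$, where $(U_t f)|_k h_{(0,d)}=\sum_{\beta\in\bF_q} f|_k h_{(0,d)}\xi_\beta$. Here I would use Lemma \ref{LemUtCusp}(\ref{LemUtCusp-inf}): $\xi_\beta h_{(c,d)}\in \Gamma_1(t^n) h_{(tc,d-\beta c)}\xi_\beta$, so with $c=0$ this becomes $\xi_\beta h_{(0,d)}\in\Gamma_1(t^n) h_{(0,d)}\xi_\beta$, i.e.\ $f|_k h_{(0,d)}\xi_\beta = (f|_k h_{(0,d)})|_k\xi_\beta$. Writing $(f|_k h_{(0,d)})(z)=\sum_{i\geq 1} a_i u(z)^i$ and summing over $\beta\in\bF_q$, the linear-in-$u(z)$ part comes only from $i=1$ via Lemma \ref{LemCoeffHecke}(\ref{LemCoeffHecke-beta1}) with $\frem=t$ (which gives $\sum_\beta u(\frac{z+\beta}{t})=t\,u(z)$, again matching the slash normalization), while higher $i$ give no linear term by Lemma \ref{LemCoeffHecke}(\ref{LemCoeffHecke-beta2}). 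Thus $L_d(U_t f)=a_1=L_d(f)$ as well. Since $L$ is bijective and $L\circ T = L$ for every Hecke operator $T$, we conclude $T$ acts as the identity on $S_k/S'_k$.

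The main obstacle is bookkeeping rather than conceptual: one must track the determinant and automorphy factors in the slash operator carefully so that the scalars appearing match exactly the normalization in Lemma \ref{LemCoeffHecke} (this is why the statements there are phrased with the factor $\frem$ built in), and one must be sure that the cusp $h_{(0,d)}(\infty)$ really has uniformizer $u(z)$ — which is exactly Lemma \ref{LemUnif}(\ref{LemUnif_inf}) with $c=0$, so $m=n-1$. Once these are in place, no nontrivial permutation of the cusps $h_{(0,d)}(\infty)$ occurs (the relevant $\Gamma_1(t^n)$-coset identities fix the index $d$), so the Hecke matrix on $S_k/S'_k$ in the basis dual to $(L_d)_d$ is literally the identity.
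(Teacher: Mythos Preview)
Your proposal is correct and follows essentially the same approach as the paper: the paper's proof is a compressed version of exactly what you wrote, namely ``By Lemma~\ref{LemCoeffHecke} and Lemma~\ref{LemHeckeOp}, we obtain $L_d(T_\frem f)=L_d(f)$ for any $d\in A_{n-1}$ and the injectivity of the map $L$ shows $T_\frem f \equiv f \bmod S'_k$.'' Your separate treatment of $\frem=t$ via Lemma~\ref{LemUtCusp}(\ref{LemUtCusp-inf}) is redundant (Lemma~\ref{LemHeckeOp} already covers both cases uniformly, and $f|_k h_{(0,d)}\xi_\beta=(f|_k h_{(0,d)})|_k\xi_\beta$ is just associativity of the slash action), but the bookkeeping is otherwise accurate.
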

\begin{proof}
	Let $\frem\in A$ be any monic irreducible polynomial. 
	Take any $f\in S_k$.
	By Lemma \ref{LemCoeffHecke} and Lemma \ref{LemHeckeOp},
	we obtain $L_d(T_\frem f)=L_d(f)$ for any $d\in A_{n-1}$ and the injectivity of the map 
	$L$ shows $T_\frem f \equiv f \bmod S'_k$. This concludes the proof.
\end{proof}



\subsection{Nilpotency of $U_t$ on $S'_k/S_k^{(2)}$}\label{SubsecNilp}


For any integer $i$, put
\[
C_i=\{(c,d)\in A_{n-1}^2\mid \bar{v}_t(c)\geq i\}.
\]
To study the $U_t$-action on $S'_k$, we define
\[
	S'_{k,i}=\{f\in S_k\mid \ord(h_{(c,d)}(\infty),f)\geq 2\text{ for any }(c,d)\in C_i \}
\]
so that
\[
S'_k=S'_{k,n-1}\supseteq S'_{k,n-2}\supseteq\cdots \supseteq S'_{k,0}=S'_{k,-1}\supseteq S_k^{(2)}.
\]

\begin{prop}\label{PropMidNilp}
	Let $i\in [0,n-1]$ be any integer. 
	\begin{enumerate}
		\item\label{PropMidNilp-inf} $U_t(S'_{k,i})\subseteq S'_{k,i-1}$.
		\item\label{PropMidNilp-inf0} $U_t(S'_{k,0})\subseteq S^{(2)}_k$.
	\end{enumerate}
In particular, the operator $U_t$ acting on $S'_k/S^{(2)}_k$ is nilpotent.
\end{prop}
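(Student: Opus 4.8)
The plan is to track, for each form $f$ and each cusp $h_{(c,d)}(\infty)$ with $(c,d)\in A_{n-1}^2$, how the order of vanishing changes under $U_t$, using the coset decompositions of Lemma \ref{LemUtCusp}. Concretely, for $f\in S_k$ I would compute $(U_tf)|_kh_{(c,d)}$ by writing $U_t f=\sum_{\beta\in\bF_q}f|_k\xi_\beta$ and applying $\xi_\beta h_{(c,d)}$; Lemma \ref{LemUtCusp}(\ref{LemUtCusp-inf}) rewrites $\xi_\beta h_{(c,d)}$ as $\Gamma_1(t^n)h_{(tc,d-\beta c)}\xi_\beta$, so modulo the automorphy under $\Gamma_1(t^n)$ we get
\[
(U_tf)|_kh_{(c,d)}=\sum_{\beta\in\bF_q}\bigl(f|_kh_{(tc,d-\beta c)}\bigr)\big|_k\xi_\beta .
\]
Now $\bar{v}_t(tc)\ge i$ whenever $\bar v_t(c)\ge i-1$ (with $\bar v_t(tc)=n-1$ once $v_t(c)\ge n-2$), so for $(c,d)\in C_{i-1}$ all the pairs $(tc,d-\beta c)$ lie in $C_i$. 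The uniformizer at $h_{(tc,d-\beta c)}(\infty)$ and at $h_{(c,d)}(\infty)$ are given by Lemma \ref{LemUnif}(\ref{LemUnif_inf}) as powers of $u$ of the form $u_{n-1-m}$; a Fourier expansion $f|_kh_{(tc,d-\beta c)}=\sum_{j\ge 2}a_ju_{n-1-m'}(z)^j$ (valid since $f\in S'_{k,i}$) composed with $\xi_\beta$ is handled by Lemma \ref{LemUnifHecke}, which shows each $u_{l-1}((z+\beta)/t)$ lies in $tu_l(z)A[\zeta_{t^l}][[u_l(z)]]$, hence has no constant term; so the whole sum, read in the uniformizer $u_s(z)$ at $s=h_{(c,d)}(\infty)$, vanishes to order $\ge 2$. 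This gives $U_t(S'_{k,i})\subseteq S'_{k,i-1}$, which is part (\ref{PropMidNilp-inf}).

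For part (\ref{PropMidNilp-inf0}) I must also control the ramified cusps, i.e. those in $\Cusps_0(\Gamma_1(t^n))$, represented by $h_{(0,d)}(0)$ for $d\in A_{n-1}$ by Lemma \ref{LemCusps}(\ref{LemCusps_zero}). Starting from $f\in S'_{k,0}$, which vanishes to order $\ge 2$ at every cusp $h_{(c,d)}(\infty)$, I need $U_tf\in S_k^{(2)}$, i.e. order $\ge 2$ at the cusps over $0$ as well. Again write $(U_tf)|_kh_{(0,d)}J=\sum_{\beta}(f|_k\xi_\beta h_{(0,d)}J)$ and split off $\beta=0$ from $\beta\neq 0$. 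For $\beta\neq 0$, Lemma \ref{LemUtCusp}(\ref{LemUtCusp-inf0}) rewrites $\xi_\beta h_{(0,d)}J$ in terms of $h_{(\beta^{-1}(1+td),d)}$ followed by $\mathrm{diag}(1,t)$ and an element of $\mathit{SL}_2(\bF_q)$; since $\beta^{-1}(1+td)$ is a \emph{unit} mod $t$, we have $\bar v_t(\beta^{-1}(1+td))=0$, so the relevant cusp $h_{(\beta^{-1}(1+td),d)}(\infty)$ has uniformizer exactly $u_{n-1}$ (the "most ramified" unramified-type cusp), and $f$ vanishes there to order $\ge 2$ because $f\in S'_{k,0}$; the $\mathrm{diag}(1,t)$-substitution $u_{n-1}(z)\mapsto u_{n-1}(tz)$ is handled by Lemma \ref{LemCoeffHecke}(\ref{LemCoeffHecke-dia}) (which only raises the order), and the $\mathit{SL}_2(\bF_q)$-factor is a change of the constant $\bF_q$-structure not affecting vanishing. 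For $\beta=0$, Lemma \ref{LemUtCusp}(\ref{LemUtCusp-0}) gives $\xi_0h_{(0,d)}J\in\Gamma_1(t^n)h_{(0,d)}J\,\mathrm{diag}(t,1)$, and $\mathrm{diag}(t,1)$ acts on the expansion in $u_n(z)$ at $h_{(0,d)}(0)$ essentially by $z\mapsto tz$ up to a scalar — I would check, as in Lemma \ref{LemCoeffHecke}(\ref{LemCoeffHecke-dia}), that this at least preserves the order of vanishing, using only that $f\in S_k$ already vanishes to order $\ge 1$ there. Adding up, $(U_tf)|_kh_{(0,d)}J$ has no linear term, so $U_tf\in S_k^{(2)}$.

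Finally, combining the two parts: $U_t^{n}$ maps $S'_k=S'_{k,n-1}$ into $S'_{k,n-2}$ into $\cdots$ into $S'_{k,0}$ into $S_k^{(2)}$, so $U_t$ is nilpotent on $S'_k/S_k^{(2)}$. The main obstacle I anticipate is the $\beta=0$ ramified-cusp case in part (\ref{PropMidNilp-inf0}): unlike the $\beta\ne 0$ terms, which are pushed into the already-controlled unramified cusps, this term genuinely lives at a cusp over $0$, and one must argue that the $\mathrm{diag}(t,1)$-twist of a form that a priori only vanishes to order $\ge 1$ there still produces no linear term after summing with the other $\beta$ — i.e. the cancellation or order-raising must be extracted carefully from the explicit expansion of $u_n$, in the spirit of the denominator computation at the end of the proof of Lemma \ref{LemCoeffHecke}. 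The unramified bookkeeping (part (\ref{PropMidNilp-inf})) is comparatively routine once Lemmas \ref{LemUnif}, \ref{LemUnifHecke} and \ref{LemUtCusp}(\ref{LemUtCusp-inf}) are in hand.
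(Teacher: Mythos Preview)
Your approach is essentially the paper's, but two steps need correction.

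In part (\ref{PropMidNilp-inf}), your uniform use of Lemma~\ref{LemUnifHecke} breaks down when $c=0$: then $tc=0$ as well, so both source and target cusps $h_{(0,d)}(\infty)$ have uniformizer $u_0=u$, and Lemma~\ref{LemUnifHecke} (which converts $u_{l-1}((z+\beta)/t)$ into a power series in $u_l$, not $u_{l-1}$) does not give what you need. The paper handles $c=0$ separately by invoking Lemma~\ref{LemPDCStable}, which ultimately rests on Lemma~\ref{LemCoeffHecke}(\ref{LemCoeffHecke-beta2}) --- the sum over $\beta$ of $u((z+\beta)/t)^j$ lies in $tu(z)^2A[u(z)]$ for $j\ge 2$. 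You should do the same.

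In part (\ref{PropMidNilp-inf0}) for $\beta\neq 0$, your description of the substitution is backwards and you invoke the wrong lemma. The factor $\begin{pmatrix}1&0\\0&t\end{pmatrix}\begin{pmatrix}\beta&-1\\0&\beta^{-1}\end{pmatrix}$ sends $z$ to $(\beta z-1)/(\beta^{-1}t)=\beta^2(z-\beta^{-1})/t$, so after a harmless $\bF_q^\times$-rescaling you must expand $u_{n-1}\bigl((z-\beta^{-1})/t\bigr)$ in terms of the \emph{target} uniformizer $u_n(z)$ at $h_{(0,d)}(0)$, not in terms of $u_{n-1}$. This is again Lemma~\ref{LemUnifHecke} (with $l=n$), not Lemma~\ref{LemCoeffHecke}(\ref{LemCoeffHecke-dia}). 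By contrast, your worry about the $\beta=0$ term is misplaced: there the substitution really is $z\mapsto tz$, and Lemma~\ref{LemCoeffHecke}(\ref{LemCoeffHecke-dia}) applied to $u_n$ shows $u_n(tz)\in u_n(z)^2\bC_\infty[[u_n(z)]]$, so order $\ge 1$ at the cusp over $0$ already suffices --- no delicate cancellation is needed.
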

\begin{proof}
	For the assertion (\ref{PropMidNilp-inf}), take any $f\in S'_{k,i}$ and $(c,d)\in C_{i-1}$. 
	We need to show
	\begin{equation}\label{EqnPropMidNilp}
		\ord(h_{(c,d)}(\infty),U_tf)\geq 2.
	\end{equation}
	Since the case of $c=0$ follows from Lemma \ref{LemPDCStable}, we may assume $c\neq 0$. 
	Put $m=\bar{v}_t(c)$. 
	For any $\beta\in \bF_q$, we have $(tc,d-\beta c)\in C_i$ and the assumption yields $\bar{v}_t(tc)=m+1$. 
	By Lemma \ref{LemUnif} (\ref{LemUnif_inf}), we can write
	\[
	(f|_k h_{(tc, d-\beta c)})(z)=\sum_{j\geq 2} a_j^{(\beta)}u_{n-2-m}(z)^j,\quad a_j^{(\beta)}\in \bC_\infty
	\]
	and Lemma \ref{LemUtCusp} (\ref{LemUtCusp-inf}) yields
	\begin{align*}
		((U_t f)|_k h_{(c,d)})(z)&=\sum_{\beta\in \bF_q} (f|_k\xi_\beta h_{(c,d)})(z)=\sum_{\beta\in \bF_q} (f|_k h_{(tc,d-\beta c)}\xi_\beta)(z)\\
		&=\frac{1}{t}\sum_{\beta\in \bF_q}\sum_{j\geq 2} a_j^{(\beta)}u_{n-2-m}\left(\frac{z+\beta}{t}\right)^j.
	\end{align*}
	Since the uniformizer at $h_{(c,d)}(\infty)$ is $u_{n-1-m}(z)$, Lemma \ref{LemUnifHecke} gives the inequality (\ref{EqnPropMidNilp}).

	Let us show the assertion (\ref{PropMidNilp-inf0}). Take any $f\in S'_{k,0}$ and $d\in A_{n-1}$. 
	Since we already know $U_tf\in S'_{k,0}$ by (\ref{PropMidNilp-inf}), it is enough to show
	\begin{equation}\label{EqnPropMidNilp0}
	\ord(h_{(0,d)}(0),U_tf)\geq 2.
	\end{equation}
By Lemma \ref{LemUnif} (\ref{LemUnif_zero}), the uniformizer at $h_{(0,d)}(0)=h_{(0,d)}J(\infty)$ is $u_n(z)$. 
	
	Consider the equality
	\begin{equation}\label{EqnMidNilp-inf0}
		(U_tf)|_k h_{(0,d)}J=\sum_{\beta\in \bF_q^\times} f|_k\xi_\beta h_{(0,d)}J+ f|_k \xi_0h_{(0,d)}J.
	\end{equation} 
	For the first term in the right-hand side of (\ref{EqnMidNilp-inf0}), we have 
	\[
	\bar{v}_t(\beta^{-1}(1+td))=0
	\] 
	and 
	by Lemma \ref{LemUnif} (\ref{LemUnif_inf}) we can write
	\[
	(f|_k h_{(\beta^{-1}(1+td),d)})(z)=\sum_{j\geq 2} a_j u_{n-1}(z)^j,\quad a_j\in \bC_\infty.
	\]
	Then Lemma \ref{LemUtCusp} (\ref{LemUtCusp-inf0}) gives
	\begin{align*}
		(f|_k \xi_\beta h_{(0,d)}J)(z)&=t^{k-1}(\beta^{-1}t)^{-k} \sum_{j\geq 2} a_j u_{n-1}\left(\frac{\beta z-1}{\beta^{-1}t}\right)^j\\
	&=\frac{\beta^k}{t} \sum_{j\geq 2} a_j \beta^{-2j} u_{n-1}\left(\frac{z-\beta^{-1}}{t}\right)^j
		\end{align*}
	and by Lemma \ref{LemUnifHecke} this term lies in $u_n(z)^2\bC_\infty[[u_n(z)]]$.
	
	For the second term in the right-hand side of (\ref{EqnMidNilp-inf0}), write
	\[
	(f|_k h_{(0,d)}J)(z)=\sum_{j\geq 1} a_j u_n(z)^j,\quad a_j\in \bC_\infty.
	\]
	By Lemma \ref{LemUtCusp} (\ref{LemUtCusp-0}), we have
	\[
	(f|_k \xi_0h_{(0,d)}J)(z)=t^{k-1}(f|_k h_{(0,d)}J)(tz)=t^{k-1}\sum_{j\geq 1} a_j u_n(tz)^j.
	\]
	Since Lemma \ref{LemCoeffHecke} (\ref{LemCoeffHecke-dia}) shows
	\[
	u_n(tz) \in u_n(z)^2\bC_\infty[[u_n(z)]],
	\]
	we obtain the inequality (\ref{EqnPropMidNilp0}).
	This concludes the 
	proof of the proposition.
\end{proof}

Recall that we fixed an embedding $\iota_t:\bar{K}\to \bC_t$. We say $\lambda\in \bar{K}$ is a $t$-adic unit if $\iota_t(\lambda)\in \cO^\times_{\bC_t}$.

\begin{thm}\label{ThmNilpV}
For any integer $k\geq 2$, the following are equivalent.
\begin{enumerate}
	\item\label{ThmNilpV-DCk} $U_t$ acting on $S_k^{(2)}(\Gamma_1(t^n))$ has no $t$-adic unit eigenvalue.
	\item\label{ThmNilpV-k} $U_t$ acting on $S'_k$ has no $t$-adic unit eigenvalue.
	\item\label{ThmNilpV-dim} $\dim_{\bC_\infty}S_k^\ord(\Gamma_1(t^n))\leq q^{n-1}$.
	\item\label{ThmNilpV-DC2} $U_t$ acting on $S_2^{(2)}(\Gamma_1(t^n))$ is nilpotent.
	\item\label{ThmNilpV-2} $U_t$ acting on $S'_2$ is nilpotent.
	\item\label{ThmNilpV-dim2} $\dim_{\bC_\infty}S_2^\ord(\Gamma_1(t^n))\leq q^{n-1}$.
\end{enumerate}	
If these equivalent conditions hold, then for any $k\geq 2$ we have
\[
\dim_{\bC_\infty}S_k^\ord(\Gamma_1(t^n))=q^{n-1}
\]
and all Hecke operators act trivially on $S_k^\ord(\Gamma_1(t^n))$.
\end{thm}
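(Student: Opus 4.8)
The plan is to establish the six equivalences by a cycle of implications, and then derive the final assertions about the dimension and triviality of the Hecke action from condition (\ref{ThmNilpV-dim}). Throughout, the key structural inputs are the chain $S'_k\supseteq S_k^{(2)}$ together with the nilpotency of $U_t$ on $S'_k/S_k^{(2)}$ (Proposition \ref{PropMidNilp}), the triviality of all Hecke operators on $S_k/S'_k$ (Lemma \ref{LemQuotTriv}), and the two constancy results from \cite{Ha_DMBF}: the dimension of $S_k^\ord(\Gamma_1(t^n))$ is independent of $k$ \cite[Proposition 3.4]{Ha_DMBF}, and there is a compatibility of ordinary dimensions across levels \cite[Proposition 3.5]{Ha_DMBF}.

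First I would prove (\ref{ThmNilpV-DCk}) $\Leftrightarrow$ (\ref{ThmNilpV-k}): since $U_t$ is nilpotent on $S'_k/S_k^{(2)}$, the characteristic polynomial of $U_t$ on $S'_k$ is $X^{\dim S'_k/S_k^{(2)}}$ times that on $S_k^{(2)}$, so $U_t$ on $S'_k$ has a $t$-adic unit eigenvalue if and only if $U_t$ on $S_k^{(2)}$ does; the same argument with eigenvalue $0$ gives (\ref{ThmNilpV-DC2}) $\Leftrightarrow$ (\ref{ThmNilpV-2}). Next, for the passage to the full space $S_k$: by Lemma \ref{LemQuotTriv}, $U_t$ acts as the identity on the $q^{n-1}$-dimensional quotient $S_k/S'_k$, so the ordinary ($=$ unit-eigenvalue generalized eigenspace) part of $S_k$ surjects onto $S_k/S'_k$ with kernel the ordinary part of $S'_k$; hence
\[
\dim_{\bC_\infty}S_k^\ord(\Gamma_1(t^n)) = q^{n-1} + \dim_{\bC_\infty}(S'_k)^\ord.
\]
This identity shows (\ref{ThmNilpV-k}) $\Leftrightarrow$ (\ref{ThmNilpV-dim}) (the unit-eigenvalue generalized eigenspace of $U_t$ on $S'_k$ is zero iff the displayed dimension is $\leq q^{n-1}$), and likewise after the weight-$2$ specialization we get (\ref{ThmNilpV-2}) $\Leftrightarrow$ (\ref{ThmNilpV-dim2}). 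It remains to link the weight-$k$ conditions to the weight-$2$ conditions, and here I would invoke \cite[Proposition 3.4]{Ha_DMBF}: $\dim_{\bC_\infty}S_k^\ord(\Gamma_1(t^n))$ does not depend on $k\geq 2$, so condition (\ref{ThmNilpV-dim}) holds for one $k$ iff it holds for $k=2$, i.e. (\ref{ThmNilpV-dim}) $\Leftrightarrow$ (\ref{ThmNilpV-dim2}). Chaining these gives the full set of equivalences.

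For the final assertions, assume the equivalent conditions hold. Then $(S'_k)^\ord = 0$ for every $k\geq 2$, so the displayed identity gives $\dim_{\bC_\infty}S_k^\ord(\Gamma_1(t^n)) = q^{n-1}$ exactly. Moreover, since $S_k^\ord(\Gamma_1(t^n))$ injects into $S_k/S'_k$ (its intersection with $S'_k$ is $(S'_k)^\ord=0$) and this injection is a $U_t$-equivariant map onto the $q^{n-1}$-dimensional space $S_k/S'_k$, it is an isomorphism; since \emph{all} Hecke operators act trivially on $S_k/S'_k$ by Lemma \ref{LemQuotTriv}, and this isomorphism is equivariant for the whole Hecke algebra (Lemma \ref{LemQuotTriv} is proved for all monic irreducible $\frem$, and $U_\frem$ for $\frem\mid t^n$ means $U_t$), all Hecke operators act trivially on $S_k^\ord(\Gamma_1(t^n))$, as desired.

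The main obstacle I anticipate is not in the formal bookkeeping above but in the compatibility of \emph{Hecke-equivariance} under the various identifications — in particular verifying that the surjection $S_k^\ord \twoheadrightarrow S_k/S'_k$ is equivariant for every Hecke operator (not just $U_t$), which is exactly what Lemma \ref{LemQuotTriv} provides, and that the splitting of the ordinary part respects the filtration $S'_k \subseteq S_k$. One should also be careful that "$U_t$ has no $t$-adic unit eigenvalue" and "the ordinary subspace of $S'_k$ is zero" are genuinely the same statement, which requires knowing that ordinariness on $S'_k$ is detected by $U_t$-eigenvalues alone; this is built into the definition of $S_k^\ord$ via $U_\wp=U_t$, so no extra work is needed, but it is the point where the definitions must be matched precisely.
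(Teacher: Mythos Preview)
Your approach is essentially the same as the paper's, but there is one genuine gap. Your ``weight-$2$ specialization'' of the argument for (\ref{ThmNilpV-k}) $\Leftrightarrow$ (\ref{ThmNilpV-dim}) actually proves only that (\ref{ThmNilpV-dim2}) is equivalent to ``$U_t$ on $S'_2$ has no $t$-adic unit eigenvalue'', i.e.\ $(S'_2)^\ord=0$. This is condition (\ref{ThmNilpV-k}) at $k=2$, not condition (\ref{ThmNilpV-2}), which asserts \emph{nilpotency}. In general ``no unit eigenvalue'' does not imply ``nilpotent'': an eigenvalue could be a nonzero element of $\bar K$ with positive $t$-adic valuation. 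So as written, your chain establishes the block $(\ref{ThmNilpV-DCk})\Leftrightarrow(\ref{ThmNilpV-k})\Leftrightarrow(\ref{ThmNilpV-dim})\Leftrightarrow(\ref{ThmNilpV-dim2})$ and, separately, $(\ref{ThmNilpV-DC2})\Leftrightarrow(\ref{ThmNilpV-2})$, but never links the two blocks.

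The paper closes this gap with an extra input you have not invoked: by \cite[(2.6) and Proposition 2.2]{Ha_DMBF}, every eigenvalue of $U_t$ on $S_2(\Gamma_1(t^n))$ is algebraic over $\bF_q$, hence is either $0$ or a root of unity (so a $t$-adic unit). It follows that on any $U_t$-stable subspace of $S_2(\Gamma_1(t^n))$, ``no $t$-adic unit eigenvalue'' and ``nilpotent'' are equivalent. With this fact in hand, $(S'_2)^\ord=0$ is the same as $U_t|_{S'_2}$ nilpotent, and your two blocks merge. You should insert this step explicitly; the rest of your argument (the dimension identity $\dim S_k^\ord = q^{n-1}+\dim (S'_k)^\ord$, the use of Proposition \ref{PropMidNilp} and Lemma \ref{LemQuotTriv}, and the appeal to \cite[Proposition 3.4]{Ha_DMBF}) matches the paper and is correct.
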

\begin{proof}
	
	The equivalence of (\ref{ThmNilpV-DCk})--(\ref{ThmNilpV-dim}) follows from Lemma \ref{LemQuotDim}, Lemma \ref{LemQuotTriv} and Proposition 
	\ref{PropMidNilp}. 
	By \cite[(2.6) and Proposition 2.2]{Ha_DMBF}, any eigenvalue of $U_t$ acting on $S_2(\Gamma_1(t^n))$ is algebraic over $\bF_q$.
	Thus $U_t$ acts on a subspace of $S_2(\Gamma_1(t^n))$ without $t$-adic unit eigenvalue if and only if the action is nilpotent.
	This shows the equivalence of (\ref{ThmNilpV-DC2})--(\ref{ThmNilpV-dim2}). The equivalence of (\ref{ThmNilpV-dim}) and (\ref{ThmNilpV-dim2})
	follows from \cite[Proposition 3.4 (1)]{Ha_DMBF}. 
	
	
	If these conditions hold, then we have $\dim_{\bC_\infty}S_k^\ord(\Gamma_1(t^n))=q^{n-1}$
	and 
	the natural map
	\[
	S_k^\ord(\Gamma_1(t^n))\to S_k/S'_k
	\]
	is an isomorphism compatible with all Hecke operators. Now the last assertion follows from Lemma \ref{LemQuotTriv}. 
\end{proof}

Since $X_1(t)_{\bC_\infty}$ is of genus zero, we have $S_2^{(2)}(\Gamma_1(t))=0$ and the nilpotency of $U_t$ acting on it holds trivially.
Thus Theorem \ref{ThmNilpV} yields the following corollary, which reproves \cite[Lemma 2.4]{Ha_GVt} and \cite[Proposition 4.3]{Ha_DMBF}
without using the theory of $A$-expansions \cite{Pet} or Bandini-Valentino's formula \cite[(4.2)]{BV0}.

\begin{cor}\label{Cor1}
	For any $k\geq 2$, we have
	\[
	\dim_{\bC_\infty}S_k^\ord(\Gamma_1(t))=1
	\]
	and all Hecke operators act trivially on $S_k^\ord(\Gamma_1(t))$.
\end{cor}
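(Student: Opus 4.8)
The plan is to obtain the corollary as the special case $n = 1$ of Theorem \ref{ThmNilpV}. Since the six conditions of that theorem are equivalent and together imply both the dimension formula $\dim_{\bC_\infty} S_k^\ord(\Gamma_1(t^n)) = q^{n-1}$ and the triviality of all Hecke operators on $S_k^\ord(\Gamma_1(t^n))$, it suffices to verify any one of those conditions for $\frn = t$. The cheapest choice is condition (\ref{ThmNilpV-DC2}): I would check that $U_t$ acts nilpotently on $S_2^{(2)}(\Gamma_1(t))$ simply because that space is $0$.

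For $\frn = t$ one has $\Cusps(\Gamma_1(t)) = \{\infty, 0\}$, so $h = 2$, and the Drinfeld modular curve $X_1(t)_{\bC_\infty}$ has genus $g = 0$; this is classical (a level of degree one gives a rational modular curve) and is in any case implicit in the genus data underlying \cite[Proposition 5.4]{Boeckle}. As in the proof of Lemma \ref{LemQuotDim}, identify $S_2^{(2)}(\Gamma_1(t))$ with $H^0(X_1(t)_{\bC_\infty}, \bar{\omega}^{\otimes 2}(-2\Cusps))$, where $\bar{\omega}$ is the Hodge bundle. Using $\deg(\bar{\omega}^{\otimes 2}) = 2g - 2 + 2h$ one computes
\[
\deg\bigl(\bar{\omega}^{\otimes 2}(-2\Cusps)\bigr) = (2g - 2 + 2h) - 2h = 2g - 2 = -2 < 0,
\]
so this cohomology group vanishes, hence $S_2^{(2)}(\Gamma_1(t)) = 0$. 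Therefore $U_t$ is trivially nilpotent on it, i.e. condition (\ref{ThmNilpV-DC2}) of Theorem \ref{ThmNilpV} holds with $n = 1$.

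Granting this, the concluding assertion of Theorem \ref{ThmNilpV} gives, for every $k \geq 2$,
\[
\dim_{\bC_\infty} S_k^\ord(\Gamma_1(t)) = q^{1-1} = 1
\]
and the triviality of all Hecke operators on $S_k^\ord(\Gamma_1(t))$, which is exactly the claim. I anticipate no genuine obstacle here: the whole argument is a specialization of Theorem \ref{ThmNilpV}, and the only inputs beyond it are the genus-zero property of $X_1(t)_{\bC_\infty}$ and the one-line degree count above. If one wanted to sidestep even the genus statement, one could instead verify condition (\ref{ThmNilpV-2}) by hand, analysing $S'_2$ for $\frn = t$ through the harmonic-cocycle description of \S\ref{SecTriv}, but that would be strictly more work and is not needed.
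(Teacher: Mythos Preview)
Your proposal is correct and follows essentially the same approach as the paper: the paper also observes that $X_1(t)_{\bC_\infty}$ has genus zero, concludes $S_2^{(2)}(\Gamma_1(t))=0$, and then invokes Theorem \ref{ThmNilpV}. Your explicit degree computation for $\bar{\omega}^{\otimes 2}(-2\Cusps)$ is a bit more detailed than the paper's one-line remark, but the argument is the same.
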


Note that by \cite[Corollary 5.7]{GN} the genus of $X_1(t^n)_{\bC_\infty}$ is 
\[
1+q^{2n-2}-(n+1)q^{n-1}+(n-1)q^{n-2}
\]
and for $n\geq 2$ it is zero only if $n=q=2$.




\section{Freeness and triviality}\label{SecTriv}

In this section, we prove the triviality of the Hecke action on $S_k(\Gamma_1(t^n))$ for any $k\geq 2$ and $n\geq 1$
(Theorem \ref{ThmTriv}).
Put $\Theta_n=1+tA_n\subseteq A_n^\times$. The key point of the proof is to show that 
$S_2(\Gamma_1(t^n))$, which we consider as a $\bC_\infty[\Theta_n]$-module via the diamond operator, is
the direct sum of copies of $\bC_\infty[\Theta_n]$ (Proposition \ref{PropFree}). 
For this, we need a description of $S_2(\Gamma_1(t^n))$ using harmonic 
cocycles on the Bruhat-Tits 
tree.


\subsection{Bruhat-Tits tree and $\Gamma_1(t^n)$}\label{SubsecBT}

We consider $K_\infty^2$ as the set of row vectors, and define an action $\circ$ of $\mathit{GL}_2(K_\infty)$ on $K_\infty^2$ by \[
\gamma\circ (x_1,x_2)=(x_1,x_2)\gamma^{-1}.
\]
Let $\cT$ be the Bruhat-Tits tree for $\mathit{SL}_2(K_\infty)$. 
Recall that the set $\cT_0$ of vertices of $\cT$ is by definition the set of $K_\infty^\times$-equivalence classes of $\cO_{K_\infty}$-lattices in $K_\infty^2$,
where $\cO_{K_\infty}$ is the ring of integers of $K_\infty$. 
The action $
\circ$ 
induces an action of $\mathit{GL}_2(K_\infty)$ on the tree $\cT$, and also on the oriented tree $\cT^o$ associated to $\cT$.
We denote by $\cT^o_1$ the set of oriented edges. For any $e\in \cT^o_1$, the origin, the terminus and the opposite edge of $e$ are denoted by $o(e)$, 
$t(e)$ and $-e$, respectively. Then the group $\{\pm 1\}$ acts on $\cT^o_1$ by $(-1)e=-e$, which commutes with the action of $\mathit{GL}_2(K_\infty)$.

Put $\pi=1/t$, which is a uniformizer of $K_\infty$. For any integer $i$, let $v_i$ be the class of the lattice $\cO_{K_\infty}(\pi^{i},0)\oplus  \cO_{K_\infty}
(0,1)$. Then we have $\begin{pmatrix}
\pi^{-i}&0\\0&1
\end{pmatrix}v_0=v_i$. We denote by $e_i$ the oriented edge with origin $v_i$ and terminus $v_{i+1}$.

For any subgroup $\Gamma$ of $\mathit{SL}_2(A)$, we say $e\in \cT^o_1$ is $\Gamma$-stable if $\Stab(\Gamma,e)=\{1\}$, and $\Gamma$-unstable otherwise. 
We define $\Gamma$-stability of a vertex similarly.
The set of $\Gamma$-stable edges is denoted by $\cT_1^{o,\Gamma\text{-}\mathrm{st}}$.
For $\Gamma=\Gamma_1(t)$, we know \cite[\S7]{LM} that the set of $\Gamma_1(t)$-stable edges is equal to $\Gamma_1(t)J(\pm e_0)$ with
\[
J=\begin{pmatrix}
	0&-1\\1&0
\end{pmatrix}.
\]
By \cite[Ch.~II, \S1.2, Corollary]{Se}, this shows:

\begin{lem}\label{LemSt1}
	A complete set of representatives of $\Gamma_1(t^n)\backslash \cT_1^{o,\Gamma_1(t)\text{-}\mathrm{st}}/\{\pm 1\}$ is given by
	\[
	\Lambda_{1,n}=\{{h}_{(c,d)}J e_0\mid c,d\in A_{n-1}\}.
	\]
\end{lem}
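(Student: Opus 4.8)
The plan is to combine the known description of the $\Gamma_1(t)$-stable edges with standard double-coset manipulations. First I recall from \cite[\S7]{LM} that $\cT_1^{o,\Gamma_1(t)\text{-}\mathrm{st}} = \Gamma_1(t) J(\pm e_0)$; since $\{\pm 1\}$ commutes with the $\mathit{GL}_2(K_\infty)$-action and acts freely here, it follows that $\cT_1^{o,\Gamma_1(t)\text{-}\mathrm{st}}/\{\pm 1\}$ is a single $\Gamma_1(t)$-orbit, represented by the class of $J e_0$, with $\Stab(\Gamma_1(t), J e_0) = \{1\}$ (the edge being $\Gamma_1(t)$-stable, and the opposite edge $-J e_0$ lying in a different $\Gamma_1(t)$-orbit, as also recorded in \cite[\S7]{LM}).

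Next I invoke \cite[Ch.~II, \S1.2, Corollary]{Se}: for a group $G$ acting on a tree, a subgroup $\Gamma \le G$, and a $G$-orbit $G\cdot x$ with $\Stab(G,x)$ mapping onto the orbit, the $\Gamma$-orbits inside $G\cdot x$ are in bijection with the double cosets $\Gamma \backslash G / \Stab(G,x)$. Applying this with $G = \Gamma_1(t)$, $x$ the class of $J e_0$ in $\cT_1^{o,\Gamma_1(t)\text{-}\mathrm{st}}/\{\pm 1\}$, and $\Stab(\Gamma_1(t),x) = \{1\}$, I get that a complete set of representatives of $\Gamma_1(t^n)\backslash \cT_1^{o,\Gamma_1(t)\text{-}\mathrm{st}}/\{\pm 1\}$ is given by $\{\gamma J e_0\}$, where $\gamma$ runs over a complete set of representatives of $\Gamma_1(t^n)\backslash \Gamma_1(t)$.

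Finally I appeal to the description of $\Gamma_1(t^n)\backslash \Gamma_1(t)$ recalled in \S\ref{SubsecCusps}: the set $\{h_{(c,d)} \mid c,d \in A_{n-1}\}$ is a complete set of representatives. Substituting $\gamma = h_{(c,d)}$ yields exactly the set $\Lambda_{1,n} = \{h_{(c,d)} J e_0 \mid c,d \in A_{n-1}\}$, as claimed. The only point requiring care — and the main (minor) obstacle — is checking that the hypotheses of Serre's corollary are genuinely met, i.e.\ that $J e_0$ has trivial $\Gamma_1(t)$-stabilizer modulo $\{\pm 1\}$ and that the $\Gamma_1(t)$-action on the relevant orbit is free; both are immediate from the $\Gamma_1(t)$-stability statement of \cite[\S7]{LM}, so no substantial difficulty remains.
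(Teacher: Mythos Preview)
Your proposal is correct and follows essentially the same route as the paper: the paper simply records the fact from \cite[\S7]{LM} that $\cT_1^{o,\Gamma_1(t)\text{-}\mathrm{st}}=\Gamma_1(t)J(\pm e_0)$ and then invokes \cite[Ch.~II, \S1.2, Corollary]{Se}, together with the earlier description of $\Gamma_1(t^n)\backslash\Gamma_1(t)$ by the $h_{(c,d)}$. You have merely unpacked these citations and made explicit the check that the stabilizer of the class of $Je_0$ in $\Gamma_1(t)$ modulo $\{\pm1\}$ is trivial.
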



\subsection{Harmonic cocycles}\label{SubsecHC}


In this subsection, we recall a description of Drinfeld cuspforms using harmonic cocycles due to Teitelbaum \cite{Tei}, following \cite{Boeckle} and 
\cite{Ha_DMBF}.

Let $k\geq 2$ be any integer. We denote by $H_{k-2}(\bC_\infty)$ the $\bC_\infty$-subspace of homogeneous polynomials of degree $k-2$ in 
the polynomial ring $\bC_\infty[X,Y]$. We consider the left action $\circ$ of $\mathit{GL}_2(K)$
on it defined by
\[
\gamma\circ(X,Y)=(X,Y)\gamma.
\] 
We put $V_k(\bC_\infty)=\Hom_{\bC_\infty}(H_{k-2}(\bC_\infty),\bC_\infty)$, on which 
$\mathit{GL}_2(K)$ acts naturally.
For $\xi\in \mathit{GL}_2(K)$, $\omega\in V_k(\bC_\infty)$ and $P(X,Y)\in H_{k-2}(\bC_\infty)$, the action is given by
\[
(\xi\circ \omega)(P(X,Y))=\omega(\xi^{-1}\circ P(X,Y))=\omega(P((X,Y)\xi^{-1})).
\]

\begin{dfn}\label{DefHar}
	A map $c:\cT^o_1\to V_k(\bC_\infty)$ is called a harmonic cocycle of weight $k$ over $\bC_\infty$ if the following two conditions hold:
	\begin{enumerate}
		\item For any $v\in \cT_0$, we have
		\[
		\sum_{e\in \cT^o_1,\ t(e)=v}c(e)=0.
		\]
		\item For any $e\in \cT_1^o$, we have $c(-e)=-c(e)$.
		\end{enumerate}
	For any arithmetic subgroup $\Gamma$ of $\mathit{SL}_2(A)$, we say $c$ is $\Gamma$-equivariant if $c(\gamma e)=\gamma\circ c(e)$ for any $\gamma\in 
	\Gamma$ and $e\in \cT^o_1$. We denote the $\bC_\infty$-vector space of $\Gamma$-equivariant harmonic cocycles of weight $k$ over $\bC_\infty$ by 
	$C^\har_k(\Gamma)$.
	\end{dfn}

Let $\Gamma$ be an arithmetic subgroup of $\mathit{SL}_2(A)$ which is $p'$-torsion free. In this case, for any $\Gamma$-unstable vertex $v$,
the group $\Stab(\Gamma,v)$ fixes a unique rational end which we denote by $b(v)$.

\begin{dfn}\label{DefSource}
	A $\Gamma$-stable edge $e'\in \cT^o_1$ is called a $\Gamma$-source of an edge $e\in \cT^o_1$ if the following conditions hold.
	\begin{enumerate}
		\item If $e$ is $\Gamma$-stable, then $e'=e$.
		\item If $e$ is $\Gamma$-unstable, then  a vertex $v$ of $e'$ is $\Gamma$-unstable, $e$ lies on the unique half line from $v$ to $b(v)$ and $e$ has the 
		same orientation as $e'$ with respect to this half line.
		\end{enumerate}
	The set of $\Gamma$-sources of $e$ is denoted by $\src_\Gamma(e)$.
	\end{dfn}
For any harmonic cocycle $c:\cT^o_1\to V_k(\bC_\infty)$ of weight $k$ over $\bC_\infty$, we have
\begin{equation}\label{EqnSrc}
c(e)=\sum_{e'\in\src_\Gamma(e)}c(e').
\end{equation}

We denote by $S_k(\Gamma)$ the $\bC_\infty$-vector space of Drinfeld cuspforms of level $\Gamma$ and weight $k$. 
Then, for any rigid analytic function $f$ on $\Omega$ and $e\in \cT^o_1$, 
Teitelbaum defined an element $\Res(f)(e)\in V_k(\bC_\infty)$, which gives
a natural isomorphism of $\bC_\infty$-vector spaces
\begin{equation}\label{EqnIsomRes}
\Res_\Gamma: S_k(\Gamma)\to C^\har_k(\Gamma),\quad f\mapsto (e\mapsto \Res(f)(e))
\end{equation}
\cite[Theorem 16]{Tei}. Note that we are following the normalization in \cite[Theorem 5.10]{Boeckle}. Moreover, by \cite[(17)]{Boeckle}, the slash operator
can be read off via the corresponding harmonic cocycle by
\begin{equation}\label{EqnBoeckle}
\Res(f|_k\gamma)(e)=\gamma^{-1}\circ \Res(f)(\gamma e).
\end{equation}

Teitelbaum gave another description of Drinfeld cuspforms using the Steinberg module $\St$, which is defined as the kernel of the augmentation map
\[
\St=\Ker(\bZ[\bP^1(K)]\to \bZ).
\]
It admits a natural right $\mathit{GL}_2(K)$-action via
\[
(\gamma,(x:y))\mapsto (x:y)\gamma.
\]
Then, for any arithmetic subgroup $\Gamma$ of $\mathit{SL}_2(A)$ which is $p'$-torsion free, 
\cite[p.~506]{Tei} gives a $\bC_\infty$-linear isomorphism
\begin{equation}\label{EqnHarSt}
C^\har_k(\Gamma)\to \St\otimes_{\bZ[\Gamma]}V_k(\bC_\infty).
\end{equation}

\begin{lem}\label{LemDim}
\[
\dim_{\bC_\infty} C^\har_2(\Gamma_1(t^n))=q^{2(n-1)}.
\]
\end{lem}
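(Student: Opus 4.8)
The plan is to transport the statement to Drinfeld cuspforms and then to a combinatorial count. Since Teitelbaum's isomorphism $\Res_{\Gamma_1(t^n)}$ of (\ref{EqnIsomRes}) identifies $S_2(\Gamma_1(t^n))$ with $C^\har_2(\Gamma_1(t^n))$ as $\bC_\infty$-vector spaces, it suffices to prove $\dim_{\bC_\infty}S_2(\Gamma_1(t^n))=q^{2(n-1)}$. For this I would use B\"ockle's dimension formula $\dim_{\bC_\infty}S_k(\Gamma_1(t^n))=(k-1)(g-1+h)$ \cite[Proposition 5.4]{Boeckle} (already invoked in the proof of Lemma \ref{LemQuotDim}), where $g$ denotes the genus of $X_1(t^n)_{\bC_\infty}$ and $h$ its number of cusps. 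With $k=2$ the claim becomes the identity $g-1+h=q^{2(n-1)}$.

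For the genus I would simply quote $g=1+q^{2n-2}-(n+1)q^{n-1}+(n-1)q^{n-2}$ from \cite[Corollary 5.7]{GN}, as recorded after Corollary \ref{Cor1}. For the number of cusps I would use Lemma \ref{LemCusps}: the cusps lying over $0\in\Cusps(\Gamma_1(t))$ are in bijection with $A_{n-1}$, hence there are $q^{n-1}$ of them, while the cusps lying over $\infty$ are indexed by pairs $(c,d)\in A_{n-1}^2$ modulo the relation $c=c'$, $d'-d\in cA_{n-1}$, so their number equals $\sum_{c\in A_{n-1}}|A_{n-1}/cA_{n-1}|$. Organizing this sum by $m=\bar{v}_t(c)$, the single term $c=0$ contributes $|A_{n-1}|=q^{n-1}$, while for each $m\in[0,n-2]$ there are exactly $(q-1)q^{n-2-m}$ elements $c$ with $\bar{v}_t(c)=m$, each of which satisfies $cA_{n-1}=t^mA_{n-1}$ (write $c=t^mu$ with $u\in A_{n-1}^\times$) and hence contributes $|A_{n-1}/t^mA_{n-1}|=q^m$; summing gives $q^{n-1}+(n-1)(q-1)q^{n-2}$ cusps over $\infty$, and therefore $h=2q^{n-1}+(n-1)(q-1)q^{n-2}$.

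It then remains to add the two formulas: writing $(n-1)(q-1)q^{n-2}=(n-1)q^{n-1}-(n-1)q^{n-2}$, the $q^{n-2}$-terms of $g-1+h$ cancel, the coefficient of $q^{n-1}$ is $-(n+1)+2+(n-1)=0$, and only $q^{2n-2}=q^{2(n-1)}$ survives, which is the assertion. The one step that genuinely needs care is the cusp count, which I would sanity-check on small cases (for $n=1$ one gets $g=0$, $h=2$, hence $\dim=1$; for $n=q=2$ one gets $g=0$, $h=5$, hence $\dim=4$). In principle one could instead bypass the genus formula and compute $C^\har_2(\Gamma_1(t^n))$ intrinsically on the Bruhat--Tits tree, since a weight-$2$ harmonic cocycle takes values in the trivial $\mathit{GL}_2(K)$-module $V_2(\bC_\infty)=\bC_\infty$, so that $C^\har_2(\Gamma_1(t^n))$ is exactly the space of $\Gamma_1(t^n)$-invariant harmonic $\bC_\infty$-cochains on $\cT$, which one might pin down using Lemma \ref{LemSt1} together with the source relation (\ref{EqnSrc}); however, bookkeeping the $\Gamma_1(t^n)$-unstable edges appears more delicate than the Riemann--Roch route, so I would carry out the argument above.
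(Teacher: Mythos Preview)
Your argument is correct and all the arithmetic checks out: the cusp count $h=2q^{n-1}+(n-1)(q-1)q^{n-2}$ is right, and combining it with the Gekeler--Nonnengardt genus formula indeed collapses to $g-1+h=q^{2(n-1)}$.

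However, the paper takes a much shorter and more structural route. Rather than passing through $S_2$, the genus, and the cusp count, it uses the Steinberg-module isomorphism (\ref{EqnHarSt}) together with \cite[Lemma 3.6]{Ha_DMBF} to identify $\dim_{\bC_\infty}C^\har_2(\Gamma_1(t^n))$ directly with the index $[\Gamma_1(t):\Gamma_1(t^n)]=\sharp A_{n-1}^2=q^{2(n-1)}$. This is essentially the tree-theoretic computation you mention at the end and then set aside: for $k=2$ the coefficient module $V_2(\bC_\infty)=\bC_\infty$ is trivial, so $\St\otimes_{\bZ[\Gamma_1(t^n)]}\bC_\infty$ has dimension equal to the number of $\Gamma_1(t^n)$-orbits of $\Gamma_1(t)$-stable edges (up to sign), which is exactly the index. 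Your approach has the virtue of being self-contained within the paper (it only uses Lemma \ref{LemCusps} and formulas already quoted), but it imports the genus of $X_1(t^n)$ as a black box; the paper's approach avoids any curve geometry and makes transparent why the answer is a group index, which also dovetails with the explicit basis $\{[c,d]\}$ constructed immediately afterward.
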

\begin{proof}
	The isomorphism (\ref{EqnHarSt}) and \cite[Lemma 3.6]{Ha_DMBF} show that the dimension is $[\Gamma_1(t):\Gamma_1(t^n)]$, which equals $\sharp A^2_{n-1}
	=q^{2(n-1)}$.
	\end{proof}

\begin{lem}\label{LemHar2}
	Let $c$ be any element of $C^\har_2(\Gamma_1(t^n))$.
	\begin{enumerate}
		\item\label{LemHar2-Triv} For any $\gamma\in\Gamma_1(t^n)$ and $e\in \cT^o_1$, we have $c(\gamma e)=c(e)$.
		\item\label{LemHar2-St1} $c$ is determined by its restriction to the subset $\Lambda_{1,n}$ of Lemma \ref{LemSt1}.
		\end{enumerate}
	\end{lem}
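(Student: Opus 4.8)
The plan is to exploit the fact that for weight $k=2$ the action of $\mathit{GL}_2(K)$ on $V_2(\bC_\infty)=\Hom_{\bC_\infty}(H_0(\bC_\infty),\bC_\infty)$ is trivial: since $H_0(\bC_\infty)=\bC_\infty$ (the constant polynomials) and $\gamma\circ(X,Y)=(X,Y)\gamma$ fixes every constant, we get $\gamma\circ\omega=\omega$ for all $\gamma\in\mathit{GL}_2(K)$ and $\omega\in V_2(\bC_\infty)$. Hence the equivariance condition in Definition \ref{DefHar}, namely $c(\gamma e)=\gamma\circ c(e)$, collapses to $c(\gamma e)=c(e)$ for all $\gamma\in\Gamma_1(t^n)$ and $e\in\cT^o_1$. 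This is exactly assertion (\ref{LemHar2-Triv}); I would state it in one or two lines.

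For assertion (\ref{LemHar2-St1}), the idea is to combine (\ref{LemHar2-Triv}) with the source decomposition (\ref{EqnSrc}). By Lemma \ref{LemSt1}, every $\Gamma_1(t)$-stable edge is $\Gamma_1(t^n)$-equivalent, up to sign, to an element of $\Lambda_{1,n}=\{h_{(c,d)}Je_0\mid c,d\in A_{n-1}\}$; combined with part (\ref{LemHar2-Triv}) and the cocycle relation $c(-e)=-c(e)$, the value of $c$ on any $\Gamma_1(t)$-stable edge is determined (up to the sign $\pm1$) by its values on $\Lambda_{1,n}$. Now for an arbitrary edge $e\in\cT^o_1$, formula (\ref{EqnSrc}) applied with $\Gamma=\Gamma_1(t)$ (which is $p'$-torsion free since $t\in A\setminus\bF_q$, so Definition \ref{DefSource} applies) expresses $c(e)=\sum_{e'\in\src_{\Gamma_1(t)}(e)}c(e')$ as a finite sum of values of $c$ on $\Gamma_1(t)$-stable edges. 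Therefore $c(e)$ is determined by the restriction $c|_{\Lambda_{1,n}}$, as claimed.

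The only point requiring a little care — and the place I expect a referee to want detail — is the legitimacy of invoking the source decomposition (\ref{EqnSrc}) for the \emph{non-torsion-free} group $\Gamma_1(t)$ rather than for $\Gamma_1(t^n)$. The formula (\ref{EqnSrc}) as stated in the excerpt is attached to the running hypothesis that $\Gamma$ is $p'$-torsion free, and the notion of $b(v)$ in Definition \ref{DefSource} was set up under that hypothesis; but $\Gamma_1(t)$ is precisely the group whose stable edges were classified in Lemma \ref{LemSt1}, and it is $p'$-torsion free because $t\notin\bF_q$ (so that $\Gamma_1(t)$ acts without $p'$-torsion, exactly as remarked for $\Gamma_1(\frn)$ in \S\ref{SubsecCU}). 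So in fact (\ref{EqnSrc}) does apply verbatim to $\Gamma=\Gamma_1(t)$, and I would simply note this. An equivalent, perhaps cleaner, route for (\ref{LemHar2-St1}) is to use the isomorphism (\ref{EqnHarSt}): $C^\har_2(\Gamma_1(t^n))\cong\St\otimes_{\bZ[\Gamma_1(t^n)]}V_2(\bC_\infty)$, and with the trivial $V_2$-action this is $\St\otimes_{\bZ[\Gamma_1(t^n)]}\bC_\infty$, whose dimension $q^{2(n-1)}$ (Lemma \ref{LemDim}) matches $\sharp\Lambda_{1,n}$; one then checks the evaluation map $c\mapsto c|_{\Lambda_{1,n}}$ is injective by the source argument and concludes by dimension count. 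I would present the source-decomposition argument as the main line and mention the dimension count as a consistency check.
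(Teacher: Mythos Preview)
Your proof is correct and follows essentially the same route as the paper: for (\ref{LemHar2-Triv}) both you and the paper use the triviality of the $\mathit{GL}_2(K)$-action on $V_2(\bC_\infty)$, and for (\ref{LemHar2-St1}) both reduce via the source decomposition (\ref{EqnSrc}) with $\Gamma=\Gamma_1(t)$ to the $\Gamma_1(t)$-stable edges and then invoke Lemma \ref{LemSt1}. Your extra care in checking that $\Gamma_1(t)$ is $p'$-torsion free (so that (\ref{EqnSrc}) applies) is well placed---the paper leaves this implicit---and the dimension-count consistency check you mention is exactly what the paper carries out in the subsequent Corollary \ref{CorExist}.
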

\begin{proof}
	Since the group $\mathit{GL}_2(K)$ acts trivially on $V_2(\bC_\infty)=\bC_\infty$, we have $c(\gamma e)=\gamma\circ c(e)=c(e)$ and
	the assertion (\ref{LemHar2-Triv}) follows. 
	
	For the assertion (\ref{LemHar2-St1}), it suffices to show that if the restriction of $c$ to 
	$\Lambda_{1,n}$ is zero, then $c(e)=0$ for any $e\in \cT_1^o$. By (\ref{EqnSrc}), 
	we may assume that $e$ is $\Gamma_1(t)$-stable. 
	Then it is written as $e=\pm \gamma e'$ with some $e'\in \Lambda_{1,n}$ and $\gamma\in \Gamma_1(t^n)$,
	which yields 
	$c(e)=\pm \gamma\circ c(e')=0$. This concludes the proof.
	\end{proof}

\begin{cor}\label{CorExist}
	The $\bC_\infty$-linear map
	\[
	C^\har_2(\Gamma_1(t^n))\to \bigoplus_{e\in \Lambda_{1,n}}\bC_\infty,\quad c\mapsto (c(e))_{e\in\Lambda_{1,n}}
	\]
	is an isomorphism.
	\end{cor}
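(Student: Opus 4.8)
The plan is to deduce the corollary formally from two things already in hand: the injectivity of the evaluation map, supplied by Lemma \ref{LemHar2} (\ref{LemHar2-St1}), and the dimension formula $\dim_{\bC_\infty}C^\har_2(\Gamma_1(t^n))=q^{2(n-1)}$ of Lemma \ref{LemDim}. Indeed, the displayed map is $\bC_\infty$-linear, and Lemma \ref{LemHar2} (\ref{LemHar2-St1}) says precisely that a cocycle $c\in C^\har_2(\Gamma_1(t^n))$ with $c(e)=0$ for all $e\in\Lambda_{1,n}$ is identically zero; hence the map has trivial kernel.

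Since $\Lambda_{1,n}$ is by construction the image of $A_{n-1}^2$ under $(c,d)\mapsto h_{(c,d)}Je_0$, we have $\sharp\Lambda_{1,n}\le \sharp A_{n-1}^2=q^{2(n-1)}$, so the codomain $\bigoplus_{e\in\Lambda_{1,n}}\bC_\infty$ has dimension at most $q^{2(n-1)}$; combining this with injectivity and Lemma \ref{LemDim} gives
\[
q^{2(n-1)}=\dim_{\bC_\infty}C^\har_2(\Gamma_1(t^n))\le \sharp\Lambda_{1,n}\le q^{2(n-1)}.
\]
Thus both inequalities are equalities: in particular $\sharp\Lambda_{1,n}=q^{2(n-1)}$, the codomain has dimension $q^{2(n-1)}$, and an injective $\bC_\infty$-linear map between $\bC_\infty$-vector spaces of equal finite dimension is an isomorphism. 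That proves the corollary.

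I do not expect a genuine obstacle here; the only point worth checking carefully is that Lemma \ref{LemDim} is logically prior to Corollary \ref{CorExist}, which it is, its proof running through the isomorphism (\ref{EqnHarSt}) and \cite[Lemma 3.6]{Ha_DMBF} rather than through any evaluation statement. If one prefers not to read $\sharp\Lambda_{1,n}=q^{2(n-1)}$ off the dimension comparison, one can instead verify directly that the edges $h_{(c,d)}Je_0$, $(c,d)\in A_{n-1}^2$, are pairwise distinct: the $h_{(c,d)}$ represent distinct cosets of $\Gamma_1(t^n)\backslash\Gamma_1(t)$, and $\Stab(\Gamma_1(t),Je_0)=\{1\}$ since $Je_0$ is $\Gamma_1(t)$-stable, so $h_{(c',d')}^{-1}h_{(c,d)}$ fixing $Je_0$ forces $(c,d)=(c',d')$; then once more injectivity together with Lemma \ref{LemDim} yields the isomorphism.
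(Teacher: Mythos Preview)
Your proof is correct and follows the same route as the paper: injectivity from Lemma \ref{LemHar2} (\ref{LemHar2-St1}) together with the dimension count of Lemma \ref{LemDim}. The only cosmetic difference is that the paper reads off $\sharp\Lambda_{1,n}=q^{2(n-1)}$ directly from Lemma \ref{LemSt1} (a complete set of representatives has distinct elements), whereas you recover it from the sandwich $q^{2(n-1)}\le\sharp\Lambda_{1,n}\le q^{2(n-1)}$; your alternative stabilizer argument for the same fact is also fine.
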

\begin{proof}
	By Lemma \ref{LemHar2} (\ref{LemHar2-St1}), the map is injective. Since $\sharp \Lambda_{1,n}=q^{2(n-1)}$, Lemma \ref{LemDim} implies that it is an 
	isomorphism.
	\end{proof}

By Corollary \ref{CorExist},
there exists a unique element $[c,d]\in C^\har_2(\Gamma_1(t^n))$ satisfying
\[
[c,d]({h}_{(c',d')}J e_0)=\left\{\begin{array}{ll}1& \text{if }(c',d')=(c,d),\\
	0 & \text{otherwise}.\end{array}\right.
\]
The set $\{[c,d]\mid c,d\in A_{n-1}\}$ forms a basis of the $\bC_\infty$-vector space $C^\har_2(\Gamma_1(t^n))$.


\subsection{Proof of the main theorem}\label{SubsecFree}


Consider the subgroup $\Theta_n=1+tA_n$ of $A_n^\times$.
Via the isomorphism $\Res_{\Gamma_1(t^n)}$ of (\ref{EqnIsomRes}), the diamond operator $\langle \alpha\rangle_{t^n}$ acting on $S_2(\Gamma_1(t^n))$ induces an 
operator
on $C^\har_2(\Gamma_1(t^n))$, which we also denote by $\langle \alpha\rangle_{t^n}$.
In particular, the group $\Theta_n$ acts on $C^\har_2(\Gamma_1(t^n))$ via $\alpha\mapsto \langle \alpha\rangle_{t^n}$.

\begin{lem}\label{LemDiaHar}
	For any $a,c,d\in A_{n-1}$, the action of $1+ta\in \Theta_n$ on $[c,d]$ is given by
	\[
	\langle 1+ta \rangle_{t^n}[c,d]=[(1+ta)^{-1}c,(1+ta)^{-1}(d-a)].
	\]
\end{lem}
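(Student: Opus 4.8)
The plan is to compute everything on the side of harmonic cocycles, using the explicit basis $\{[c,d]\}$ and evaluating on the distinguished edges $h_{(c',d')}Je_0$. The idea is that the diamond operator becomes a simple reindexing of these edges once we invoke Lemma \ref{LemDiamond}.

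First I would unwind the definition of $\langle 1+ta\rangle_{t^n}$ on $C^\har_2(\Gamma_1(t^n))$. Pick a lift $\fra\in A$ of $1+ta\in A_n$, so that $\langle 1+ta\rangle_{t^n}$ on $S_2(\Gamma_1(t^n))$ is $f\mapsto f|_2\eta_{\fra,\diamond}$. Since $\mathit{GL}_2(K)$ acts trivially on $V_2(\bC_\infty)=\bC_\infty$, formula (\ref{EqnBoeckle}) shows that the corresponding operator on cocycles sends $c$ to the cocycle $e\mapsto c(\eta_{\fra,\diamond}e)$ (the twist $\eta_{\fra,\diamond}^{-1}\circ(-)$ is trivial). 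Hence
\[
(\langle 1+ta\rangle_{t^n}[c,d])(h_{(c',d')}Je_0)=[c,d](\eta_{\fra,\diamond}h_{(c',d')}Je_0).
\]

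Next I would apply Lemma \ref{LemDiamond}, which gives $\eta_{\fra,\diamond}h_{(c',d')}=\gamma\,h_{((1+ta)c',\,a+d'+tad')}$ for some $\gamma\in\Gamma_1(t^n)$, together with the $\Gamma_1(t^n)$-invariance of weight-$2$ cocycles (Lemma \ref{LemHar2}(\ref{LemHar2-Triv})) to get
\[
[c,d](\eta_{\fra,\diamond}h_{(c',d')}Je_0)=[c,d]\bigl(h_{((1+ta)c',\,a+(1+ta)d')}Je_0\bigr),
\]
using $a+d'+tad'=a+(1+ta)d'$. By the defining property of $[c,d]$ this value is $1$ exactly when $(1+ta)c'=c$ and $a+(1+ta)d'=d$, i.e. $c'=(1+ta)^{-1}c$ and $d'=(1+ta)^{-1}(d-a)$ (inverses taken in $A_{n-1}$, where $1+ta$ is a unit), and $0$ otherwise. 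Comparing with the characterization of the basis elements coming from Corollary \ref{CorExist} yields $\langle 1+ta\rangle_{t^n}[c,d]=[(1+ta)^{-1}c,(1+ta)^{-1}(d-a)]$, as claimed.

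I do not expect a genuine obstacle here; the whole argument is a short bookkeeping computation. The only points requiring care are (i) confirming that it is $\eta_{\fra,\diamond}$ (and not its inverse) that acts, which is fixed by the conventions of \S\ref{SubsecHeckeOp} together with (\ref{EqnBoeckle}), and (ii) tracking the two coordinates of the index correctly through Lemma \ref{LemDiamond}; both are routine.
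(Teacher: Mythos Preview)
Your proposal is correct and follows essentially the same route as the paper: evaluate $\langle 1+ta\rangle_{t^n}[c,d]$ on each $h_{(c',d')}Je_0$ using (\ref{EqnBoeckle}) and Lemma \ref{LemDiamond}, then read off the result from the defining property of the basis elements. You merely spell out explicitly the triviality of the $\mathit{GL}_2(K)$-action on $V_2(\bC_\infty)$ and the $\Gamma_1(t^n)$-invariance (Lemma \ref{LemHar2}(\ref{LemHar2-Triv})) that the paper leaves implicit.
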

\begin{proof}
	By (\ref{EqnBoeckle}) and Lemma \ref{LemDiamond}, for any $c',d'\in A_{n-1}$ we have
	\[
		\left(\langle 1+ta \rangle_{t^n}[c,d]\right)(h_{(c',d')}Je_0)=[c,d](h_{((1+ta)c',a+d'+tad')}Je_0),
	\]
	which is equal to one if $(c',d')=((1+ta)^{-1}c,(1+ta)^{-1}(d-a))$ and zero otherwise.
	This concludes the proof.
\end{proof}

\begin{prop}\label{PropFree}
	The $\bC_\infty[\Theta_n]$-module $S_2(\Gamma_1(t^n))$ is isomorphic to the direct sum of $q^{n-1}$ copies of
	$\bC_\infty[\Theta_n]$. 
\end{prop}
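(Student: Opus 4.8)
The plan is to exhibit an explicit $\bC_\infty[\Theta_n]$-basis of $S_2(\Gamma_1(t^n))$, or rather of the isomorphic module $C^\har_2(\Gamma_1(t^n))$, using the basis $\{[c,d]\mid c,d\in A_{n-1}\}$ and the formula for the diamond action from Lemma \ref{LemDiaHar}. First I would observe that the assignment $(c,d)\mapsto$ (the $\Theta_n$-orbit of $[c,d]$) organizes the basis into orbits, and that on each orbit $\Theta_n$ acts \emph{freely}: from Lemma \ref{LemDiaHar}, $1+ta$ fixes $[c,d]$ only if $(1+ta)c=c$ and $(1+ta)(d-a)=d$ in $A_{n-1}$; the second equation forces $a=d-(1+ta)^{-1}d$... but more cleanly, writing $1+ta=1$ is forced because if $1+ta$ fixes $[c,d]$ then comparing the $d$-coordinates gives $a=0$ directly (since $(1+ta)^{-1}(d-a)=d$ rearranges to $d-a=(1+ta)d=d+tad$, i.e. $-a=tad$, i.e. $a(1+td)=0$ in $A_{n-1}$, and $1+td$ is a unit, so $a=0$). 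Hence the stabilizer of every basis vector is trivial, so every $\Theta_n$-orbit in the basis has size $\sharp\Theta_n=q^{n-1}$, and the $\bC_\infty$-span of each orbit is a free $\bC_\infty[\Theta_n]$-module of rank one (a copy of the regular representation).

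The key steps, in order, are: (1) record the freeness of the $\Theta_n$-action on the index set $\{(c,d)\}$ via the computation above; (2) conclude that $C^\har_2(\Gamma_1(t^n))$ decomposes as a $\bC_\infty[\Theta_n]$-module into a direct sum of the spans of the distinct orbits, each span being $\cong\bC_\infty[\Theta_n]$; (3) count the orbits: since $\sharp A_{n-1}^2=q^{2(n-1)}$ and each orbit has size $q^{n-1}$, there are exactly $q^{n-1}$ orbits, giving $C^\har_2(\Gamma_1(t^n))\cong\bC_\infty[\Theta_n]^{\oplus q^{n-1}}$; (4) transport along the Hecke- and diamond-equivariant isomorphism $\Res_{\Gamma_1(t^n)}:S_2(\Gamma_1(t^n))\xrightarrow{\sim}C^\har_2(\Gamma_1(t^n))$ of (\ref{EqnIsomRes}) to obtain the statement for $S_2(\Gamma_1(t^n))$.

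Concretely, to make step (3) clean I would pick a set of orbit representatives: for instance, for each $c\in A_{n-1}$ the element $1+ta$ ranges over $\Theta_n$ and moves $(c,d)$ to $((1+ta)^{-1}c,(1+ta)^{-1}(d-a))$; one checks that the first coordinate alone, as $1+ta$ varies, already sweeps out the full $\Theta_n$-orbit $\Theta_n^{-1}c$ of $c$ in $A_{n-1}$ together with a compatible choice of second coordinate, so the orbits are indexed by something of size $q^{n-1}$ — alternatively, simply invoke the orbit--stabilizer count $\sharp(\text{orbits})=\sharp(A_{n-1}^2)/\sharp\Theta_n=q^{2(n-1)}/q^{n-1}=q^{n-1}$, which needs only the freeness from step (1) and the transitivity of a free action on each of its orbits. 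This orbit-counting shortcut avoids writing down an explicit transversal.

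I expect the only real subtlety to be step (1), verifying that the $\Theta_n$-action on the basis is free, i.e. that no nontrivial $1+ta$ fixes any $[c,d]$; this is the short but essential computation in $A_{n-1}$ extracted above, and everything else is formal (orbit decomposition of a free action yields a direct sum of copies of the regular representation, plus a dimension count, plus transport of structure along $\Res_{\Gamma_1(t^n)}$). One minor point to be careful about is that $\Theta_n$ is defined as $1+tA_n$ while the parameters $a$ live in $A_{n-1}$, and that the map $a\mapsto 1+ta$ is a bijection $A_{n-1}\to\Theta_n$ with $\sharp\Theta_n=q^{n-1}$; this compatibility is already implicit in Lemma \ref{LemDiaHar} and should just be noted.
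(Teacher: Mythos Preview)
Your proposal is correct and follows essentially the same approach as the paper: both work in $C^\har_2(\Gamma_1(t^n))$, use Lemma \ref{LemDiaHar} to show that the $\Theta_n$-action on the basis $\{[c,d]\}$ is free via the computation $(1+td)(a'-a)=0\Rightarrow a'=a$ (you phrase it as triviality of stabilizers, the paper as injectivity of $a\mapsto\langle 1+ta\rangle_{t^n}[c,d]$), conclude that each orbit spans a copy of $\bC_\infty[\Theta_n]$, and count orbits by dimension. The only cosmetic difference is that the paper invokes Lemma \ref{LemDim} for the total dimension rather than writing $q^{2(n-1)}/q^{n-1}$ directly.
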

\begin{proof}
	It suffices to show the assertion for $C^\har_2(\Gamma_1(t^n))$.
	Take any $(c,d)\in A_{n-1}^2$. We claim that the $\Theta_n$-orbit
	\[
	\{\langle 1+ta\rangle_{t^n}[c,d]\mid a\in A_{n-1}\}
	\]
	of $[c,d]$ is of cardinality $q^{n-1}$. Indeed, if $\langle 1+ta\rangle_{t^n}[c,d]=\langle 1+ta'\rangle_{t^n}[c,d]$
	for some $a,a'\in A_{n-1}$, then Lemma \ref{LemDiaHar} yields
	\[
	(1+ta)^{-1}(d-a)=(1+ta')^{-1}(d-a'),
	\]
	which is equivalent to $(1+td)(a'-a)=0$ and we obtain $a'=a$.
	
	We denote by $V(c,d)$ the $\bC_\infty$-subspace of $C^\har_2(\Gamma_1(t^n))$ spanned by the $\Theta_n$-orbit
	of $[c,d]$.
	Then $V(c,d)$ is stable under the $\Theta_n$-action and
	$\dim_{\bC_\infty}V(c,d)=q^{n-1}$. Consider the map
	\[
	\bC_\infty[\Theta_n]\to V(c,d),\quad \alpha\mapsto \langle\alpha\rangle_{t^n}[c,d].
	\]
	It is a homomorphism of $\bC_\infty[\Theta_n]$-modules which is surjective. Since the both sides have the same 
	dimension, it is an isomorphism. Since the $\bC_\infty$-vector space $C^\har_2(\Gamma_1(t^n))$ is the direct sum of $V(c,d)$'s, the
	proposition follows from Lemma \ref{LemDim}.
\end{proof}

\begin{thm}\label{ThmTriv}
	We have
	\[
	\dim_{\bC_\infty} S_2^\ord(\Gamma_1(t^n))=q^{n-1}
	\]
	and all Hecke operators act trivially on $S_k^\ord(\Gamma_1(t^n))$ for any $k\geq 2$.
\end{thm}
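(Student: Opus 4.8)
The plan is to deduce Theorem \ref{ThmTriv} from the machinery already assembled, chiefly Theorem \ref{ThmNilpV} and Proposition \ref{PropFree}. By Theorem \ref{ThmNilpV}, it suffices to verify one of the equivalent conditions there; the most accessible is condition (\ref{ThmNilpV-dim2}), namely the upper bound $\dim_{\bC_\infty}S_2^\ord(\Gamma_1(t^n))\leq q^{n-1}$. So the entire task reduces to bounding the dimension of the ordinary subspace in weight $2$, and then invoking the final clause of Theorem \ref{ThmNilpV} to get equality for all $k$ together with the triviality of the Hecke action.

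To obtain the bound, I would first record that $S_2^\ord(\Gamma_1(t^n))$ is stable under the diamond operators (these commute with $U_t$ by Lemma \ref{LemDiaHecke}, hence preserve generalized $U_t$-eigenspaces), so it is a $\bC_\infty[\Theta_n]$-submodule of $S_2(\Gamma_1(t^n))$. By Proposition \ref{PropFree}, $S_2(\Gamma_1(t^n))$ is free of rank $q^{n-1}$ over the group ring $R=\bC_\infty[\Theta_n]$. Now I would analyze the $\Theta_n$-fixed part: since $S_2^\ord(\Gamma_1(t))$ is one-dimensional (by \cite[Lemma 2.4]{Ha_GVt}, or equivalently Corollary \ref{Cor1}), and using the constancy of the dimension of the ordinary subspace along the tower \cite[Proposition 3.5]{Ha_DMBF}, the space $\left(S_2^\ord(\Gamma_1(t^n))\right)^{\Theta_n}$ of $\Theta_n$-invariants is one-dimensional — it is identified with $S_2^\ord(\Gamma_1(t))$ under the trace/inclusion maps. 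The key algebraic input is then: if $M\subseteq R^{\oplus q^{n-1}}$ is an $R$-submodule with $\dim_{\bC_\infty}M^{\Theta_n}=1$, then $\dim_{\bC_\infty}M\leq q^{n-1}=\dim_{\bC_\infty}R$. This holds because $R$ is a local Artinian ring (its maximal ideal is the augmentation ideal $I$, as $\Theta_n$ is a $p$-group and $\bC_\infty$ has characteristic $p$), and $M^{\Theta_n}=M[I]=\Hom_R(R/I,M)$; a one-dimensional socle forces $M$ to embed in the injective hull of $R/I$, which is $R$ itself since $R$ is Gorenstein (a group algebra of a finite group is self-injective). Thus $M$ injects into a single free summand $R$, giving $\dim_{\bC_\infty}M\leq q^{n-1}$, and applying this with $M=S_2^\ord(\Gamma_1(t^n))$ yields condition (\ref{ThmNilpV-dim2}).

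The main obstacle is the algebraic step identifying a submodule of a free module over the local self-injective ring $\bC_\infty[\Theta_n]$ with one-dimensional socle as embeddable into a single copy of the ring. Concretely one must justify: (i) $\bC_\infty[\Theta_n]$ is self-injective (standard for group algebras of finite groups over a field, by the existence of a nondegenerate symmetric associative form), so it is the injective hull of its residue field $R/I$; (ii) a finitely generated module $M$ over an Artinian ring with $\dim_{\bC_\infty}\Hom_R(R/I,M)=1$ has indecomposable injective hull equal to the injective hull of $R/I$, hence $M$ embeds into one copy of $R$; and (iii) the socle of $S_2^\ord(\Gamma_1(t^n))$ as computed via $\Theta_n$-invariants really is exactly one-dimensional, which is where \cite[Lemma 2.4]{Ha_GVt} and \cite[Proposition 3.5]{Ha_DMBF} enter. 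Once this is in place, the last assertion of Theorem \ref{ThmNilpV} immediately upgrades the inequality to equality $\dim_{\bC_\infty}S_k^\ord(\Gamma_1(t^n))=q^{n-1}$ for all $k\geq 2$ and delivers triviality of every Hecke operator on $S_k^\ord(\Gamma_1(t^n))$, completing the proof.
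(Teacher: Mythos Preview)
Your proposal is correct and follows essentially the same strategy as the paper: reduce via Theorem \ref{ThmNilpV} to the bound $\dim_{\bC_\infty}S_2^\ord(\Gamma_1(t^n))\leq q^{n-1}$, compute the $\Theta_n$-fixed part of $S_2^\ord(\Gamma_1(t^n))$ to be one-dimensional using \cite[Proposition 3.5]{Ha_DMBF} and Corollary \ref{Cor1}, and then use Proposition \ref{PropFree} to embed $S_2^\ord(\Gamma_1(t^n))$ into a single copy of $\bC_\infty[\Theta_n]$. The only difference is in how that last embedding is justified: the paper argues by hand, taking a minimal $m$ with $S_2^\ord\hookrightarrow\bigoplus_{i=1}^m V_i$ and using that the unique irreducible $\Theta_n$-representation over $\bC_\infty$ is trivial to force $m=1$, whereas you invoke the standard fact that $\bC_\infty[\Theta_n]$ is local Artinian and self-injective (being a group algebra), so a submodule with one-dimensional socle embeds into the injective hull of the residue field, namely $\bC_\infty[\Theta_n]$ itself. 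Your argument is more conceptual and slightly cleaner; the paper's is self-contained and avoids appealing to the theory of injective hulls.
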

\begin{proof}
	By Theorem \ref{ThmNilpV}, it is enough to show $\dim_{\bC_\infty} S_2^\ord(\Gamma_1(t^n))\leq q^{n-1}$. 
	Put
	\[
	\Gamma_0^p(t^n)=\left\{\gamma\in \mathit{SL}_2(A)\ \middle|\ \gamma\bmod t^n\in\begin{pmatrix}
		1+tA_n & A_n\\ 0 & 1+tA_n
	\end{pmatrix}\right\},
	\]
	as in \cite[\S3]{Ha_DMBF}.
	Then the $\Theta_n$-fixed part of $S_2(\Gamma_1(t^n))$ is $S_2(\Gamma^p_0(t^n))$. 
	Since the Hecke operator $U_t$ commutes with the action of $\Theta_n$ and it is defined by the same formula
	for the levels $\Gamma_1(t^n)$ and $\Gamma_0^p(t^n)$ \cite[\S3.1]{Ha_DMBF}, we see that $S_2^\ord(\Gamma_1(t^n))$ is 
	stable under the $\Theta_n$-action and
	\[
	S_2^\ord(\Gamma_1(t^n))^{\Theta_n}=S_2^\ord(\Gamma^p_0(t^n)),
	\]
	where the right-hand side is the ordinary subspace of $S_2(\Gamma^p_0(t^n))$ defined similarly to
	the case of $S_2(\Gamma_1(t^n))$. Then \cite[Proposition 3.5]{Ha_DMBF} and Corollary \ref{Cor1} yield
	\[
	\dim_{\bC_\infty}S_2^\ord(\Gamma_1(t^n))^{\Theta_n}=\dim_{\bC_\infty}S_2^\ord(\Gamma_0^p(t^n))=\dim_{\bC_\infty}S_2^\ord(\Gamma_1(t))=1.
	\] 
	
	On the other hand, Proposition \ref{PropFree} gives an injection of $\bC_\infty[\Theta_n]$-modules
	\[
	S_2^\ord:=S_2^\ord(\Gamma_1(t^n))\to \bigoplus_{i=1}^{q^{n-1}} V_i,\quad V_i=\bC_\infty[\Theta_n].
	\]
	Let $I$ be the set of integers $M\in [1,q^{n-1}]$ such that there exists an injection of $\bC_\infty[\Theta_n]$-modules
	$S_2^\ord\to \bigoplus_{i=1}^{M} V_i$. Then $I$ is nonempty and let $m$ be its minimal element. 
	
	Now we reduce ourselves to showing $m=1$. Suppose $m>1$ and consider an injection $S_2^\ord\to \bigoplus_{i=1}^{m} V_i$.
	Since $\Theta_n$ is an abelian $p$-group and $\bC_\infty$ contains no non-trivial $p$-power root of unity, 
	Schur's lemma implies that the only irreducible representation of $\Theta_n$ over 
	$\bC_\infty$ is the trivial representation. Since both of
	\[
	S_2^\ord\cap V_1,\quad S_2^\ord\cap \bigoplus_{i=2}^{m} V_i
	\]
	are $\bC_\infty[\Theta_n]$-submodules of $S_2^\ord$, if one of them is non-zero then it contains the trivial representation.
	Since 
	the $\bC_\infty$-vector space $(S_2^\ord)^{\Theta_n}$ is one-dimensional, we see that either of them is zero. Thus either of the induced maps
	\[
	S_2^\ord\to \left(\bigoplus_{i=1}^{m} V_i\right)/V_1\simeq \bigoplus_{i=1}^{m-1} V_i,\quad S_2^\ord\to \left(\bigoplus_{i=1}^{m} V_i\right)/
	\left(\bigoplus_{i=2}^{m} V_i\right)\simeq V_1
	\]
	is injective, which contradicts the minimality of $m$.
	This concludes the proof 
	of the theorem.
\end{proof}

Theorem \ref{ThmNilpV} and Theorem \ref{ThmTriv} yield the following corollary.
\begin{cor}\label{CorUtNilp}
	The operator $U_t$ acting on $S_2^{(2)}(\Gamma_1(t^n))$ is nilpotent.
\end{cor}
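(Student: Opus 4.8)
The plan is to read this off, with no extra computation, from the two main results just proved. The proof of Theorem \ref{ThmNilpV} has already isolated — through Lemma \ref{LemQuotTriv} and Proposition \ref{PropMidNilp} — the only portion of the $U_t$-action on $S_2(\Gamma_1(t^n))$ that is not \emph{a priori} nilpotent, namely its action on the double-cuspidal subspace $S_2^{(2)}$, and it ties nilpotency there to the dimension inequality $\dim_{\bC_\infty} S_2^\ord(\Gamma_1(t^n)) \le q^{n-1}$: this is precisely the equivalence of conditions (\ref{ThmNilpV-DC2}) and (\ref{ThmNilpV-dim2}) in that theorem. So the only input still needed is that inequality, and it is supplied by Theorem \ref{ThmTriv}.

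Concretely, first I would invoke Theorem \ref{ThmTriv}, which gives $\dim_{\bC_\infty} S_2^\ord(\Gamma_1(t^n)) = q^{n-1}$; in particular condition (\ref{ThmNilpV-dim2}) of Theorem \ref{ThmNilpV} holds. By the equivalence of conditions (\ref{ThmNilpV-DC2})--(\ref{ThmNilpV-dim2}) in that theorem, condition (\ref{ThmNilpV-DC2}) holds as well, and this is exactly the assertion that $U_t$ acting on $S_2^{(2)}(\Gamma_1(t^n))$ is nilpotent. Alternatively one can route through condition (\ref{ThmNilpV-2}) — that $U_t$ is nilpotent on all of $S'_2$ — and then simply restrict to the $U_t$-stable subspace $S_2^{(2)} \subseteq S'_2$ (stability being contained in Proposition \ref{PropMidNilp}(\ref{PropMidNilp-inf0})); either way the conclusion is immediate.

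There is essentially no obstacle at this point: all the substantive work lies in the proofs of Theorems \ref{ThmNilpV} and \ref{ThmTriv}. Recording the statement is useful mainly for its conceptual content — combined with Proposition \ref{PropMidNilp} it shows that $U_t$ is in fact nilpotent on the whole of $S'_2$, so that in weight $2$ the entire ``non-ordinary'' behaviour of $U_t$ is nilpotent and the $q^{n-1}$-dimensional quotient $S_2/S'_2 \cong S_2^\ord(\Gamma_1(t^n))$ carries all of it as the identity.
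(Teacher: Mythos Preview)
Your proof is correct and is exactly the paper's approach: the corollary is stated immediately after Theorem \ref{ThmTriv} with the one-line justification that Theorem \ref{ThmNilpV} and Theorem \ref{ThmTriv} together yield it. Your elaboration via the equivalence (\ref{ThmNilpV-DC2})$\Leftrightarrow$(\ref{ThmNilpV-dim2}) is precisely what that sentence unpacks to.
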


\begin{rmk}\label{RmkNilp}
	By Theorem \ref{ThmNilpV}, if we could prove the nilpotency of $U_t$ acting on $S_2^{(2)}(\Gamma_1(t^n))$ directly,
	then Theorem \ref{ThmTriv} would follow.
	As the proof of Theorem \ref{ThmTriv} indicates, the reason we can bypass it is that we know the dimension of $S_2^\ord(\Gamma_1(t))$
	because $X_1(t)_{\bC_\infty}$ is of genus zero.  
	The author has no idea of how to show the nilpotency directly.
\end{rmk}





\begin{thebibliography}{9a}


 





\bibitem[BV]{BV0} 
A.~Bandini and M.~Valentino: \emph{On the Atkin $U_t$-operator for $\Gamma_0(t)$-invariant Drinfeld cusp forms}, Proc. Amer. Math. Soc. \textbf{147} (2019), no. 
10, 4171--4187.


\bibitem[B\"{o}c]{Boeckle}
G.~B\"{o}ckle: \emph{An Eichler-Shimura isomorphism over function fields between Drinfeld modular forms and cohomology classes of crystals}, preprint, available 
at \url{http://typo.iwr.uni-heidelberg.de/groups/arith-geom/home/members/gebhard-boeckle/publications/}







\bibitem[Gek]{Gek}
E.-U.~Gekeler: \emph{On the coefficients of Drinfeld modular forms}, Invent. Math. \textbf{93} (1988), no. 3, 667--700. 

\bibitem[GN]{GN}
E.-U.~Gekeler and U.~Nonnengardt: \emph{Fundamental domains of some arithmetic groups over function fields}, Internat. J. Math. \textbf{6} (1995), no. 5, 
689--708. 

\bibitem[GR]{GR}
E.-U.~Gekeler and M.~Reversat: \emph{Jacobians of Drinfeld modular curves}, J. Reine Angew. Math. \textbf{476} (1996), 27--93.






\bibitem[Hat1]{Ha_HTT}
S.~Hattori: \emph{Duality of Drinfeld modules and $\wp$-adic properties of Drinfeld modular forms}, to appear in J. Lond. Math. Soc.

\bibitem[Hat2]{Ha_GVt}
S.~Hattori: \emph{Dimension variation of Gouv\^{e}a-Mazur type for Drinfeld cuspforms of level $\Gamma_1(t)$}, to appear in Int. Math. Res. Not.

\bibitem[Hat3]{Ha_DMBF}
S.~Hattori: \emph{$\wp$-adic continuous families of Drinfeld eigenforms of finite slope}, preprint, \url{arXiv:1904.08618v2}.


\bibitem[Hid]{Hida_E}
H.~Hida: \emph{Elementary theory of $L$-functions and Eisenstein series}, London Mathematical Society Student Texts \textbf{26}, 
Cambridge University Press, Cambridge, 1993.





\bibitem[LM]{LM}
W.-C.~W.~Li and Y.~Meemark: \emph{Hecke operators on Drinfeld cusp forms}, J. Number Theory \textbf{128} (2008), no. 7, 1941--1965.



\bibitem[Pet]{Pet}
A.~Petrov: \emph{$A$-expansions of Drinfeld modular forms}, J. Number Theory \textbf{133} (2013), no. 7, 2247--2266.

\bibitem[Ser]{Se}
J.-P.~Serre: \emph{Trees}, Corrected 2nd printing of the 1980 English translation, Springer Monographs in Mathematics, Springer-Verlag, Berlin, 2003.

\bibitem[Tei]{Tei}
J.~T.~Teitelbaum: \emph{The Poisson kernel for Drinfeld modular curves}, J.~Amer.~Math.~Soc. \textbf{4} (1991), no.~3, 491--511. 






\end{thebibliography}
\end{document}